\numberwithin{equation}{section}
\numberwithin{figure}{section}
\theoremstyle{plain}
\newtheorem{thm}{Theorem}[section]
\newtheorem*{thm*}{Theorem}
\theoremstyle{definition}
\newtheorem{definition}[thm]{Definition}
\newtheorem{example}[thm]{Example}
\newtheorem*{claim}{Claim}
\theoremstyle{plain}
\newtheorem{proposition}[thm]{Proposition}
\theoremstyle{remark}
\newtheorem{remark}[thm]{Remark}
\theoremstyle{plain}
\theoremstyle{plain}
\newtheorem{lemma}[thm]{Lemma}
\theoremstyle{remark}
\newtheorem*{rem*}{Remark}
\newcommand{\abs}[1]{\left\vert#1\right\vert}
\newcommand{\set}[1]{\left\{#1\right\}}
\newcommand{\Real}{\mathbb{R}}
\newcommand{\eps}{\varepsilon}
\newcommand{\half}{\frac{1}{2}}
\newcommand{\sph}{\mathbb{S}}
\newcommand{\To}{\rightarrow}
\newcommand{\pd}{\partial}
\newcommand{\dive}{\mathrm{div}}
\newcommand{\Ric}{\mathrm{Ric}}
\newcommand{\Tr}{\mathrm{tr}}
\newcommand{\sS}{\mathsf{S}}
\title[Rigidity of gradient steady Ricci solitons in dimension three]{Infinitesimal rigidity of collapsed gradient steady Ricci  solitons in dimension three}
\author{Huai-Dong Cao}
\address{
Huai-Dong Cao \\ Department of Mathematics, University of Macau, Macau, China \&
Department of Mathematics, Lehigh University \\
Bethlehem, PA, 18015, USA}
\email{huc2@lehigh.edu}
\author{Chenxu He}
\address{
Chenxu He \\
Department of Mathematics, University of Oklahoma \\
Norman, OK, 73019, USA}
\email{che@math.ou.edu}
\subjclass[2000]{53C24, 53C10, 53C21, 53C25, 53C44 \\
The research of the first author was partially supported by NSF Grant DMS-0909581.}
\begin{document}
\maketitle

\begin{abstract}
The only known example of \emph{collapsed} three-dimensional complete gradient steady Ricci solitons so far is the 3D cigar soliton $N^2\times \Real$, the product of Hamilton's cigar soliton $N^2$ and the real line $\Real$ with the product metric. R. Hamilton has conjectured that there should exist a family of collapsed positively curved three-dimensional complete gradient steady solitons, with $\sS^1$-symmetry,  connecting the 3D cigar soliton. In this paper, we make the first initial progress and prove that the infinitesimal deformation at the 3D cigar soliton is non-essential. In Appendix A, we show that the 3D cigar soliton is the unique complete nonflat gradient steady Ricci soliton in dimension three that admits two commuting Killing vector fields. 
\end{abstract}

\section{Introduction}

A complete Riemannian manifold $(M^n, g)$ is called \emph{a gradient steady Ricci soliton} if there is a smooth function $f \in C^\infty(M)$ such that the Ricci curvature of $g$ is equal to the Hessian of $f$:
\begin{equation}
\Ric = \nabla^2 f. 
\end{equation}
The function $f$ is called a potential function of $(M, g)$. Gradient steady Ricci solitons play an important role in the study of Hamilton's Ricci flow and they often arise as Type II singularity models. They are also natural generalization of Ricci flat manifolds where $f$ is a constant function. It is well-known that any compact gradient steady Ricci soliton is necessarily Ricci flat (with a constant potential function). In the non-compact case, there exist  examples of non Ricci-flat gradient steady Ricci solitons. In \cite{Hamiltonsurface} R. Hamilton discovered the first example of a complete steady soliton $N^2=(\Real^2, ds^2 )$, called the cigar soliton, that is diffeomorphic to $\Real^2$ and has the length element
\begin{equation}
ds_N^2 = \frac{4(dx^2 + dy^2)}{1+ x^2 + y^2}, 
\end{equation}   
with potential function
\[
f = \log(1+x^2 + y^2).
\]
The cigar soliton has positive curvature $R=e^{-f}$, achieving its maximum at the origin,  and is asymptotic to a cylinder of finite circumference at infinity. Furthermore, in \cite{Hamiltonsurface} Hamilton showed  the uniqueness result that a complete steady soliton on a two-dimensional manifold with bounded Gauss curvature that assumes its maximum at an origin is, up to scaling, isometric to the cigar soliton (see also \cite{CaoChenLCF}).

For $n \geq 3$, in \cite{Bryant3dim} R. Bryant proved that there exists, up to scaling, a unique complete rotationally symmetric gradient steady Ricci soliton on $\Real^n$. In dimension $n\ge 4$, there are other examples of steady Ricci solitons, see for example \cite{CaoKRsolitons} and \cite{DancerWang}. In dimension three, S. Brendle showed that, as conjectured by Perelman in $2003$, a complete non-flat $\kappa$-noncollapsed steady gradient Ricci soliton must be rotationally symmetric and therefore isometric to the Bryant soliton up to scaling, see \cite{Brendle3dim}. Thus, in dimension $n=3$, it remains to understand $\kappa$-collapsed ones for all $\kappa>0$. 

The only example so far of a collapsed three-dimensional steady gradient Ricci soliton is $N^2\times \Real$, the product of cigar soliton $N^2$ and $\Real$ with the product metric, referred as 3D cigar soliton in our paper. It admits two non-trivial commuting Killing vector fields: one generates the $\sS^1$-symmetry on the $N^2$-factor, and the other for the translation on the $\Real$-factor. In this paper we consider deformations of the 3D cigar soliton. Note that deformation theory of Einstein metrics on compact manifolds was developed by N. Koiso in \cite{Koiso} and has been extended more recently to compact Ricci solitons by F. Podest\`{a} and A. Spiro in \cite{PodestaSpiro}. In this paper we allow the underlying manifold to be non-compact. 

Let $(M^n, g, f)$ be a gradient steady Ricci soliton. A deformation of $(M^n, g, f)$ is a one-parameter family of complete gradient steady Ricci solitons $(M^n(t), g(t), f(t))$ ($0\leq t < \eps$) such that $(M^n(0), g(0), f(0)) = (M^n, g, f)$. The infinitesimal deformation associated with the family $g(t)$ is defined by 
\[
h = \frac{d}{dt}\Big{|}_{t=0} g(t)\in C^\infty(S^2(T^*M))
\]
which is the first variation of the metric $g$ and defines a symmetric $2$-tensor on $M$.  The infinitesimal deformation $h$ associated with deformation $g(t)$ of $g$ is called \emph{non-essential} if there exists  a deformation $\bar g(t)$ of $g$, not necessarily the same as $g(t)$, given by diffeomorphisms and scalings such that $h = \bar g'(0)$. Otherwise it is called \emph{essential}, see Definition \ref{def:essentialinfdeformation}.

In this paper we consider an important class of deformations of the 3D cigar soliton such that the following two conditions hold: 
\begin{itemize}
\item the metric $g(t)$ admits  a non-trivial Killing vector field for all $t\in [0, \eps)$,  
\item the scalar curvature $R(t)$ of $g(t)$ attains its maximum at some point on $M(t)$ for each $t\in [0, \eps)$.
\end{itemize}
These two conditions are referred as \emph{circle symmetry conditions}. We believe the second one is a technical condition, possibly could be removed. 
Our main result is
\begin{thm}\label{thm:main}
Let $(M(t),g(t), f(t))$ ($0\leq t < \eps$) be a deformation of the 3D cigar soliton $N^2 \times \Real$ satisfying the circle symmetry conditions for all $t\in [0, \eps)$. Then the associated infinitesimal deformation $h = g'(0)$ is non-essential. 
\end{thm}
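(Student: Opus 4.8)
The plan is to linearize the steady soliton system about the 3D cigar, remove the diffeomorphism and scaling freedom by a gauge choice, reduce the resulting equation by the circle symmetry, and then show via a weighted Bochner argument that the gauged infinitesimal deformation must vanish. First I would differentiate the soliton equation $\Ric(g(t)) = \Hess_{g(t)} f(t)$ at $t=0$. Writing $h = g'(0)$ and $\phi = f'(0)$, this produces the linearized system
\[
\half \Delta_L h - \delta^{*}(\delta h) - \half \Hess(\Tr h) = \Hess \phi - \half\left(\nabla_i h_{jl} + \nabla_j h_{il} - \nabla_l h_{ij}\right)\nabla^l f,
\]
where $\Delta_L$ is the Lichnerowicz Laplacian and $\delta$, $\delta^{*}$ are the divergence and its formal adjoint. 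Since $N^2\times\Real$ is noncompact and carries the exponentially weighted measure $e^{-f}\,\vol$ adapted to the drift field $\nabla f$, the right functional framework is the $f$-weighted $L^2$ space, in which the natural Bianchi-type operator $\delta_f h = \delta h - h(\nabla f,\cdot)$ replaces the ordinary divergence.

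Next I would strip off the non-essential directions. By definition, a non-essential $h$ is precisely one of the form $\calL_X g + c\,g$ (the infinitesimal diffeomorphism and scaling generators), so it suffices to choose $X$ and $c$ placing $h$ in the gauge $\delta_f h = 0$ with normalized trace, and then prove that the gauged tensor vanishes. In this gauge the linearized system should collapse to a single drift-Lichnerowicz-type equation of the form $\Delta_f h + 2\,\mathrm{Rm}(h) = 0$, where $\Delta_f = \Delta - \nabla_{\nabla f}$ and $\mathrm{Rm}(h)_{ij} = R_{ikjl}h^{kl}$ is the curvature action.

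Then I would bring in the circle symmetry conditions. The first condition provides a Killing field $K(t)$ for every $t$, forcing $h$ to be invariant under the $\sS^1$-action generated by $K(0)$; expanding $h$ into circle harmonics, the noninvariant modes are either pure gauge or inadmissible, so $h$ reduces to an invariant tensor depending only on the warped coordinates $(\rho,z)$ of $N^2\times\Real$. This turns the tensor equation into a linear elliptic system on the two-dimensional orbit space, equipped with boundary data from smoothness at the tip $\rho=0$—where the second condition, attainment of the scalar-curvature maximum, fixes the axis—and from boundedness at the cylindrical end $\rho\to\infty$ and the translational end $\abs{z}\to\infty$. Pairing $\Delta_f h + 2\,\mathrm{Rm}(h)=0$ with $h$ against $e^{-f}\,\vol$ and invoking the steady soliton identities $\scal + \abs{\nabla f}^2 = \text{const}$ and $\nabla\scal = 2\,\Ric(\nabla f)$, together with the positive curvature of the cigar factor $N^2$, I expect an identity of the shape $\int \abs{\nabla h}^2 e^{-f}\,\vol + (\text{nonnegative curvature term}) = 0$, which forces $h\equiv 0$ in the gauge and hence $h = \calL_X g + c\,g$.

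The main obstacle is this last step on the collapsed, noncompact manifold. The asymptotically cylindrical end of the cigar gives the drift-Lichnerowicz operator continuous spectrum accumulating at zero, so vanishing cannot follow from a spectral gap; instead one must carefully justify the absence of boundary terms at infinity, exploiting the exponential decay of $e^{-f}$ against the merely linear volume growth of the end, and then separate the genuine zero modes—which should coincide exactly with the Killing and scaling directions—from spurious bounded modes supported on the collapsing end. This is precisely where the scalar-curvature-maximum condition enters, pinning the behavior at the tip and excluding the inadmissible modes that the cylindrical end would otherwise permit.
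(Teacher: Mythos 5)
Your reduction steps (linearizing the soliton equation, imposing $\sS^1$-invariance, passing to the orbit space) parallel the paper, but the decisive final step --- the weighted Bochner argument --- has a genuine gap and in fact cannot work. First, the pairing against $e^{-f}\,d\vol$ is not defined in this setting: no decay of $h$ is assumed (the paper stresses exactly this point in a remark after the main theorem), and on $N^2\times\Real$ one computes $e^{-f}\,d\vol = 4r\,dr\,dx\,d\theta$, whose mass grows linearly in the translational direction; bounded tensors are therefore not in the weighted $L^2$ space, and cutoff boundary terms at $\abs{x}\to\infty$ do not vanish. Your appeal to ``exponential decay of $e^{-f}$'' is valid only along the cigar end, not along the $\Real$-end. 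Second, even formally the curvature term has the wrong sign: pairing $\Delta_f h + 2\,\mathrm{Rm}(h)=0$ with $h$ yields $\int\abs{\nabla h}^2e^{-f} = 2\int R_{ikjl}h^{ij}h^{kl}\,e^{-f}$, and on the cigar the only nonzero curvature gives $R_{ikjl}h^{ij}h^{kl} = 2R_{1313}\left(h^{11}h^{33}-(h^{13})^2\right)$ with $R_{1313}>0$, which is \emph{nonnegative} for a large class of deformations; positive curvature is destabilizing for the Lichnerowicz-type operator, so no vanishing can be deduced. Third, as you yourself observe, there is no spectral gap: separating variables $V=\phi_k(r)e^{ikx}$ produces an infinite-dimensional family of solutions of the linearized system with no decay, which an energy identity (blind to non-integrable solutions) can never exclude. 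You flag this as ``the main obstacle'' but propose no mechanism for it; that obstacle is precisely the content of the theorem.

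The missing idea is a pointwise \emph{positivity} input that does not follow from the linearized equation in any gauge. The paper uses that every complete nonflat three-dimensional gradient steady soliton either has positive sectional curvature or splits; since $K_{12}=K_{23}=0$ on the 3D cigar, a deformation through such solitons forces the one-sided conditions $\delta K_{12}\ge 0$ and $\delta K_{23}\ge 0$ on the first variation. After reducing the linearized system to a single scalar equation $E(V)=0$ (Proposition~\ref{prop:hVEV}) and applying a Liouville transformation, the quantity $W$ (essentially $\delta K_{12}$) becomes a \emph{nonnegative} solution of $\Delta_{\Sigma}W=W$ on an auxiliary negatively curved surface, vanishing on the boundary and satisfying a first-order inequality coming from $\delta K_{23}\ge 0$. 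Nonnegative solutions --- unlike general ones --- can be classified via Martin integral representation (Theorem~\ref{thm:poseigenfunction}): either $W=0$, in which case $h$ is a Lie derivative and hence non-essential, or $W$ is a multiple of the explicit $W_0$, which is then excluded by substituting back into the full equation $E(V)=0$. Note that $W_0$ is a genuine nonzero solution of the scalar equation $LW=0$ satisfying all the boundary conditions and inequalities; it is killed only because it fails the \emph{full} system. So any approach that uses only a gauged linear PDE plus integral estimates --- without the positivity supplied by the nonlinear family of positively curved solitons, and without returning to the full system --- cannot prove the theorem.
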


\begin{remark} R. Hamilton \cite{Hamiltonprivate} has conjectured that there should exist a family of three-dimensional complete collapsed positively curved gradient steady Ricci solitons with $\sS^1$-symmetry connecting the 3D cigar soliton. Our result indicates that, from the first variation point of view,  it is non-trivial to prove the existence of such a family. 
\end{remark}

\begin{remark}
Since the 3D cigar soliton is non-compact, the support of $h$, i.e., the set where $h$ does not vanish, may not be compact. The variation $h$ is not assumed a prior to have any decay condition at infinity either. 
\end{remark}

The system of differential equations of the first variation $h$ on a gradient Ricci soliton is well-known, see for example, \cite{CHI04} and \cite{CaoZhu}. The circle symmetry conditions allow us to reduce the system to a single differential equation of the variation of the potential function.
One of the main steps in the proof of Theorem \ref{thm:main} is to show the uniqueness of positive solutions $W$ to an elliptic partial differential equation $L W = 0$ on the upper-half plane with certain growth estimate, see Proposition \ref{prop:Wequation}. The equation $L W = 0$ can be viewed as  $\Delta W = W$, 
the Laplace eigenfunction equation with eigenvalue one, on a certain Cartan-Hadamard surface $\Sigma^2$ conformal to the upper-half plane. In \cite{CaoHeMartin} we determined the Martin compactification of $\Sigma^2$ with respect to the operator $L=\Delta -1$ and, using the theory of  Martin integral representation, we proved the uniqueness of positive solution with such growth estimate, see Theorem \ref{thm:poseigenfunction}.

The paper is organized as follows. In Section 2, we collect basics of gradient steady Ricci solitons and recall the uniqueness result Theorem \ref{thm:poseigenfunction} proved in \cite{CaoHeMartin}. The differential equations of the first variation at the 3D cigar soliton are derived in Section 3. In Section 4, we prove Theorem \ref{thm:main}.  Finally, in Appendix A, we show the uniqueness of the 3D cigar soliton among three-dimensional complete non-flat gradient steady Ricci solitons admitting two commuting Killing vector fields.  

\medskip

\textbf{Acknowledgments}. We would like to thank Wolfgang Ziller for helpful discussions. Part of the work was carried out while the first author was visiting the University of Macau, where he was partially supported by (Macao S.A.R.) Science and Technology Development Fund  FDCT/ 016/2013/A1, as well as RDG010 of University of Macau.

\medskip{}
% -------------------------------------------------------------------------------------------------

\section{Preliminaries}

In this section we collect some basics of gradient steady Ricci solitons, the system of differential equations for the first variations of the metric and potential function. The detailed calculation of first variation can be found, for example, in \cite{CHI04} and \cite{CaoZhu}. Then we recall a uniqueness result in \cite{CaoHeMartin} for positive eigenfunction on a negatively curved complete surface. It is used in the proof of Theorem \ref{thm:main}.

\smallskip

The following result of three dimensional gradient steady Ricci solitons is well-known.

\begin{thm}
Let $(M^3, g)$ be a complete nonflat gradient steady Ricci soliton. Then either 
\begin{enumerate}
\item $M$ has positive sectional curvature, or
\item the universal cover $(\tilde M, \tilde g)$ splits as $N^2 \times \Real$, i.e., $\tilde M$ is isometric to the 3D cigar soliton.   
\end{enumerate}
\end{thm}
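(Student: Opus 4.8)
The plan is to use the fact that a steady soliton is an eternal solution of the Ricci flow and then run Hamilton's strong maximum principle on the curvature operator. Let $\phi_t$ be the one-parameter group generated by $-\nabla f$; then $g(t) = \phi_t^* g$ is an eternal, hence ancient, solution of the Ricci flow on $M^3$. By B.-L. Chen's theorem that every complete ancient solution in dimension three has nonnegative sectional curvature, $(M^3,g)$ has nonnegative sectional curvature, which in dimension three is equivalent to the curvature operator $R_m$ being nonnegative. Moreover the steady identity $R + |\nabla f|^2 = \mathrm{const}$ bounds the scalar curvature, and together with nonnegativity this bounds the full curvature tensor, so that the tensor maximum principle is available.

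Next I would apply Hamilton's strong maximum principle to the evolution equation $\partial_t R_m = \Delta R_m + Q(R_m)$, where $Q(R_m)$ is the usual quadratic reaction term. Since $R_m \ge 0$ along the entire eternal flow, the strong maximum principle implies that the kernel of $R_m$ is invariant under parallel transport and under the reaction ODE, with rank constant in space. Because all the metrics $g(t)$ are isometric to $g$, this conclusion already holds for $g$ itself: either $R_m > 0$ everywhere, giving alternative (1), or $R_m$ has a nontrivial parallel kernel at every point.

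In the latter case, lift to the universal cover $(\tilde M, \tilde g)$. A null eigenvector of $R_m$ is a parallel $2$-form, which by Hodge duality in dimension three corresponds to a parallel unit vector field, so by the de Rham decomposition theorem $\tilde M$ splits isometrically as $N^2 \times \Real$, with the $\Real$-factor tangent to this field. Since the $\Real$-factor is flat, $\Ric = \Ric_{N^2}\oplus 0$, whence $\Hess f$ has no $\Real$-component and $f$ is an affine function of the $\Real$-coordinate plus a function $\bar f$ on $N^2$ satisfying $\Ric_{N^2} = \Hess_{N^2}\bar f$; thus $N^2$ is a complete two-dimensional gradient steady soliton. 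As $(M^3,g)$ is nonflat, $N^2$ is nonflat, and by Hamilton's uniqueness theorem for complete nonflat two-dimensional steady solitons, $N^2$ is, up to scaling, the cigar. Hence $\tilde M = N^2 \times \Real$ is the 3D cigar soliton, which is alternative (2). (Alternatively, tracing the soliton equation gives $\Delta f = R$ and $\Delta_f R = -2|\Ric|^2 \le 0$, where $\Delta_f = \Delta - \langle \nabla f, \nabla\cdot\rangle$; with $R \ge 0$ the strong minimum principle shows $R > 0$ unless $g$ is flat, which independently rules out the degenerate subcase.)

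I expect the main obstacle to be the careful justification of these steps in the complete, a priori noncompact setting: verifying that Chen's nonnegativity theorem applies without assuming bounded curvature in advance, that the eternal solution has enough regularity and curvature control to run Hamilton's tensor maximum principle on a noncompact manifold, and that the pointwise degeneracy of $R_m$ produced by the maximum principle transfers to a genuine global de Rham splitting of the static metric $g$.
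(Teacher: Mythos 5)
Your proposal is correct and follows essentially the same route as the paper: nonnegative sectional curvature via B.-L.\ Chen's result on three-dimensional ancient solutions, bounded curvature from the steady identity $R+\abs{\nabla f}^2=\mathrm{const}$, and then Hamilton's strong maximum principle for the curvature operator to obtain the dichotomy ``positive curvature or local splitting,'' which is exactly what the paper's citations to Shi and to Chow et al.\ (Theorem A.54) encapsulate. The de Rham splitting of the universal cover and the identification of the two-dimensional factor with Hamilton's cigar via his uniqueness theorem likewise match the paper's argument, so your write-up is simply a more detailed unpacking of the same proof.
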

\begin{proof}
From \cite[Corollary 2.4]{BLChen}, $M^3$ has non-negative sectional curvature. Since $R + \abs{\nabla f}^2 $ is a constant, $M$ has bounded curvature. The splitting of $\tilde M =(N^2, h)\times \Real$ when $M$ does not have positive sectional curvature follows from \cite[Section 3]{Shi3manifold}, or see \cite[Theorem A.54]{Chowpart1}. It follows that $(N^2, h)$ is also a nonflat gradient steady Ricci soliton and so it is isometric to Hamilton's cigar soliton. 
\end{proof}

Next, let us express the metric of the cigar soliton $N^2$ in coordinates adapted to the circle action. In polar coordinates $(\rho, \theta)$ on $\Real^2$, the cigar soliton metric (1.2) can be expressed as 

\[
ds_N^2 = \frac{4} {1+\rho^{2}} d\rho^2+ \frac{4\rho^2} {1+\rho^{2}}d\theta^2.
\]
Set 
\[
r = \frac{2\rho}{\sqrt{1+\rho^{2}}},
\] then, in terms of $(r, \theta)$,  the metric $ds_N^2$ can be further written as
\begin{eqnarray*}
ds_N^2 = \frac{16}{(4-r^2)^2} dr^2 + r^2 d\theta^2. 
\end{eqnarray*}

We now recall the geometry of the 3D cigar soliton $N^2\times \Real$. 

\begin{example}
The 3D cigar soliton $N^2\times \Real$ has length element of the  form 
\begin{equation}\label{eqn:dscigar3d}
ds^2 = \frac{16}{(4-r^2)^2} dr^2 + dx^2 + r^2 d\theta^2
\end{equation}
and the potential function is given by
\begin{equation}\label{eqn:fcigar2d}
f = -\log(4-r^2),
\end{equation}
with $(x^1, x^2,x^3)=(r,x,\theta)\in [0, 2) \times \Real \times [0,2\pi]$. Here we choose the coordinate $\theta$ such that $X = \pd_\theta$ is the non-trivial Killing vector field which generates the rotation, and $r$ is the length of $X$, i.e., $r^2 = g(X, X)$. The metric is normalized so that
\begin{equation}\label{eqn:2ndBianchicigar3d} 
\Delta f + \abs{\nabla f}^2 = 1.
\end{equation}
The non-vanishing Christoffel symbols are given by
\[
\Gamma^1_{11} = \frac{2r}{4-r^2}, \quad \Gamma^1_{33} = -\frac{r(4-r^2)^2}{16}, \quad \Gamma^3_{13} = \Gamma^3_{31} = \frac 1 r,
\]
and the nonzero Riemann curvature tensors are given by
\[
R_{1313}= - R_{1331} = R_{3131} = - R_{3113} = \frac{2r^2}{4-r^2}.
\]
\end{example}

\smallskip

Next we consider deformations of gradient steady Ricci solitons. Suppose $(M^n, g, f)$ is a complete gradient steady Ricci soliton.  A deformation of $(M, g, f)$  is a family of complete gradient steady Ricci solitons $(M^n(t), g(t), f(t))$($0 \leq t <\eps$), satisfying 
\[
\Ric_{g(t)} = \nabla^2 f(t), 
\]
such that  $(M^n(0), g(0), f(0))=(M^n, g, f)$. 

\begin{definition}\label{def:essentialinfdeformation}
A steady Ricci soliton metric $g$ is called \emph{non-deformable} if each deformation $g(t)$ of $g$ is given by diffeomorphisms and scalings, i.e., $g(t) = c(t) \varphi(t)^* (g)$ with $c(t) > 0$, $c(0) = 1$ and $\varphi(t)$ diffeomorphisms of $M$, $\varphi(0)$ the identity map.  A symmetric 2-tensor $h\in  C^\infty(S^2(T^*M))$ is called a \emph{non-essential} infinitesimal deformation of $g$ if there exists a  deformation $\bar{g}(t)$ given by diffeomorphisms and scalings such that $h=\bar g'(0)$. Otherwise $h$ is called an \emph{essential} infinitesimal deformation. 
\end{definition}

We normalize each deformation so that 
\begin{equation}\label{eqn:2Bianchigt}
\Delta_{t} f(t) + \abs{\nabla f(t)}^2_t = 1
\end{equation}
where $\Delta_t$ and $\abs{\,\cdot\,}_t$ are the Laplacian and norm with respect to the metric $g(t)$ respectively. Let 
\begin{equation}\label{eqn:Fef}
F(t) = e^{f(t)}.
\end{equation}
Then the normalization condition (\ref{eqn:2Bianchigt}) is equivalent to 
\begin{equation}\label{eqn:2ndBianchiFt}
\Delta_t F(t) =  F(t).
\end{equation}

Denote the first variations
\[
\begin{array}{rl}
h = \dfrac{d}{dt}\Big{|}_{t=0} g(t), & \quad \delta f = \dfrac{d}{dt}\Big{|}_{t=0} f(t), \\
& \\
\delta \Ric = \dfrac{d}{dt}\Big{|}_{t=0} \Ric\left(g(t)\right),  & \quad \delta \nabla^2 f = \dfrac{d}{dt}\Big{|}_{t=0} \nabla^2 f(t).
\end{array}
\]
We have the following
\begin{proposition}\label{prop:1stvariation}
The first variations $h$ and $\delta f$ satisfy the following equations, 
\begin{eqnarray}
\Delta h + \nabla_{\nabla f} h + 2 Rm(h, \cdot) + 2 \dive^* \left(\dive h + h(\nabla f, \cdot)\right) & & \nonumber \\
 + \nabla^2 \left(\Tr h + 2\delta f \right)& = & 0,  \label{eqn:varRS}
\end{eqnarray}
and
\begin{eqnarray}
\Delta (\delta f) + \half g\left(\nabla \Tr h, \nabla f\right)- \langle h, \nabla^2 f \rangle - \dive h(\nabla f) & & \nonumber \\
 - h(\nabla f, \nabla f) + 2 g\left(\nabla f, \nabla \delta f\right) & = & 0. \label{eqn:var2ndBianchi}
\end{eqnarray}
\end{proposition}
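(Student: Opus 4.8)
The plan is to differentiate, at $t=0$, the two structural relations carried by the deformation---the soliton equation $\Ric_{g(t)} = \nabla^2 f(t)$ and the normalization (\ref{eqn:2Bianchigt})---and to recognize the outcomes as (\ref{eqn:varRS}) and (\ref{eqn:var2ndBianchi}). I would dispatch the second equation first, as it is more direct, by linearizing $\Delta_t f(t) + \abs{\nabla f(t)}^2_t = 1$. Using $\delta(g^{ij}) = -h^{ij}$, the first variation of the Christoffel symbols
\[
\delta\Gamma^k_{ij} = \half g^{kl}\left(\nabla_i h_{jl} + \nabla_j h_{il} - \nabla_l h_{ij}\right),
\]
and $\Delta = g^{ij}(\pd_i\pd_j - \Gamma^k_{ij}\pd_k)$, one finds
\[
\delta(\Delta f) = \Delta(\delta f) - \langle h, \nabla^2 f\rangle - \dive h(\nabla f) + \half g(\nabla\Tr h, \nabla f),
\]
where the contraction $g^{ij}(\delta\Gamma^k_{ij})\nabla_k f$ supplies the last two terms, together with $\delta(\abs{\nabla f}^2) = -h(\nabla f, \nabla f) + 2g(\nabla f, \nabla\delta f)$. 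Since the right-hand side is the constant $1$, its variation vanishes, and summing the two displays gives (\ref{eqn:var2ndBianchi}) at once.

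The first equation demands more care. Differentiating the soliton equation yields $\delta\Ric = \delta\nabla^2 f$. On the left I would substitute the standard linearization of the Ricci tensor,
\[
\delta\Ric = -\half\Delta_L h - \dive^*(\dive h) - \half\nabla^2\Tr h, \qquad \Delta_L h = \Delta h + 2Rm(h,\cdot) - \Ric\cdot h - h\cdot\Ric,
\]
with $\dive^*$ the formal adjoint of the divergence, $(\dive^*\alpha)_{ij} = -\half(\nabla_i\alpha_j + \nabla_j\alpha_i)$. On the right I would use $\delta(\nabla^2 f)_{ij} = (\nabla^2\delta f)_{ij} - (\delta\Gamma^k_{ij})\nabla_k f$ and expand the contraction of $\delta\Gamma$ against $\nabla f$. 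Setting $\omega = h(\nabla f,\cdot)$ and rewriting $(\nabla_i h_{jl})\nabla^l f$ by the Leibniz rule, the three Christoffel terms split into a directional-derivative piece $-\half\nabla_{\nabla f}h$, a symmetrized-gradient piece $\half(\nabla_i\omega_j + \nabla_j\omega_i)$, and a Hessian-contraction piece $-\half(h\cdot\nabla^2 f + \nabla^2 f\cdot h)$.

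Multiplying $\delta\Ric = \delta\nabla^2 f$ by $-2$ and regrouping, the two trace-gradient contributions combine into $\nabla^2(\Tr h + 2\delta f)$ and the two symmetrized-gradient contributions into $2\dive^*(\dive h + h(\nabla f,\cdot))$. The single genuine input is the soliton identity itself: substituting $\Ric = \nabla^2 f$ shows that the Ricci-contraction terms $-\Ric\cdot h - h\cdot\Ric$ coming from $\Delta_L h$ are cancelled exactly by the Hessian-contraction terms $h\cdot\nabla^2 f + \nabla^2 f\cdot h$ arising from $\delta\nabla^2 f$. What remains is precisely
\[
\Delta h + \nabla_{\nabla f}h + 2Rm(h,\cdot) + 2\dive^*\left(\dive h + h(\nabla f,\cdot)\right) + \nabla^2(\Tr h + 2\delta f) = 0,
\]
which is (\ref{eqn:varRS}).

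I expect the only real obstacle to be the bookkeeping in this last step: carrying the several symmetrized-gradient and contraction terms through consistent sign conventions---in particular the conventions for $\dive$, for $\dive^*$, and for the curvature term in $\Delta_L$---so that the soliton-driven cancellation emerges cleanly and the surviving pieces regroup into the stated compact form. No analytic ingredient beyond these variation formulas and the soliton identity is required; the content is entirely the linearization of the two defining equations.
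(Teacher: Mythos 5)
Your proof is correct and takes essentially the same route as the paper's: differentiate the soliton equation $\Ric_{g(t)}=\nabla^{2}f(t)$ and the normalization $\Delta_t f(t)+\abs{\nabla f(t)}^2_t=1$ at $t=0$, insert the standard linearizations of $\Ric$, $\nabla^2 f$, $\Delta f$ and $\abs{\nabla f}^2$, and use $\Ric=\nabla^2 f$ to cancel the $\Ric\cdot h + h\cdot\Ric$ terms against the Hessian-contraction terms. The paper's proof is exactly this argument in sketch form (deferring the variation formulas to \cite{CHI04} and \cite{CaoZhu}), and your sign conventions for $\dive$, $\dive^*$ and $Rm(h,\cdot)$ match those used in the paper, so the terms regroup into (\ref{eqn:varRS}) and (\ref{eqn:var2ndBianchi}) as claimed.
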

\begin{proof}
Equation (\ref{eqn:varRS}) follows from the equation $\delta \Ric = \delta \nabla^2 f$ and the first variation formulas of $\Ric$ and $\nabla^2 f$. Equation (\ref{eqn:var2ndBianchi}) follows from the identity $\delta \Delta f + \delta \abs{\nabla f}^2 = 0$ and the first variation formulas of $\delta \Delta f$ and $\delta \abs{\nabla f}^2$. See \cite{CHI04} and \cite{CaoZhu}, for example, for more details.
\end{proof}

\smallskip

In \cite{CaoHeMartin} we considered the non-negative eigenfunctions of the Laplace operator with eigenvalue one on the complete surface $\Sigma^2 =\left( \Real\times (0, \infty),ds^2\right)$ with
\begin{equation}\label{eqn:dssurface}
ds^2 = \frac{e^{4y}+ 10 e^{2y}+1}{4\left(e^{2y}-1\right)^2}\left(dx^2 + dy^2\right),
\end{equation}
where $(x, y)\in \Real \times (0, \infty)$. The length element above defines a complete metric on $\Real \times (0, \infty)$. The Gauss curvature $K=K(y)$ is negative, bounded below by $-\frac 5 3$ with  
\[
\lim_{y\To 0} K(y) = -\frac 4 3 \quad \text{and} \quad \lim_{y \To \infty} K(y) = 0. 
\]
An eigenfunction $W$ of the Laplace operator $\Delta_{\Sigma}$ on $\Sigma^2$ with eigenvalue one solves the following equation
\begin{equation}
W_{xx} + W_{yy} - \frac{e^{4y} + 10 e^{2y}+1}{4\left(e^{2y}-1\right)^2} W = 0. 
\end{equation}
In \cite{CaoHeMartin} we proved  the following uniqueness result.

\begin{thm}\label{thm:poseigenfunction}
Let $W = W(x,y)$ be a non-negative eigenfunction with eigenvalue one on $\Sigma^2$. Suppose that $W$ vanishes on the boundary $\set{y=0}$ and satisfies the following inequality on $\Sigma^2$:
\begin{equation}\label{eqn:ineqnW}
\pd_y W - \half \coth(y) W \geq 0.
\end{equation}
Then either $W=0$ or it is a positive constant multiple of 
\[
W_0(x,y) = \frac{(e^y -1)^2}{e^{\half y}\sqrt{e^{2y}-1}}.
\]
\end{thm}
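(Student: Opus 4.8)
The plan is to read the hypothesis geometrically. Writing the conformal factor as $\lambda(y)=\frac{e^{4y}+10e^{2y}+1}{4(e^{2y}-1)^2}$, the equation for $W$ is $W_{xx}+W_{yy}=\lambda(y)W$, i.e. $(\Delta_\Sigma-1)W=0$, so $W$ is a non-negative $L$-harmonic function for $L=\Delta_\Sigma-1$ on the Cartan--Hadamard surface $\Sigma^2$; if $W\not\equiv0$ the strong maximum principle makes $W>0$ in the interior, and I would work with that. The decisive structural feature is that $\lambda$ depends only on $y$, so $L$ commutes with translations in $x$, while $\set{y=0}$ sits at infinite distance and is the arc of the ideal boundary on which we impose the Dirichlet condition $W=0$. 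First I would record two reformulations. The $x$-independent positive solutions vanishing at $y=0$ solve the ODE $\phi''=\lambda(y)\phi$, whose recessive (hence vanishing, $\sim y^{3/2}$) solution at $y=0$ is exactly $W_0$, and it grows like $e^{y/2}$ as $y\to\infty$. Second, since $\frac{d}{dy}\log\sqrt{\sinh y}=\half\coth y$, the hypothesis (\ref{eqn:ineqnW}) is equivalent to the monotonicity statement that $y\mapsto W(x,y)/\sqrt{\sinh y}$ is non-decreasing for each fixed $x$; this is the growth condition that must ultimately select $W_0$.

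Next I would build the minimal positive solutions by separation of variables. For $\alpha\in\Real$ set $W_\alpha(x,y)=e^{\alpha x}\phi_\alpha(y)$, where $\phi_\alpha$ solves $\phi''=(\lambda(y)-\alpha^2)\phi$ and is recessive at $y=0$. Because $\lambda(y)>\frac14$ with $\inf\lambda=\frac14$, the coefficient $\lambda-\alpha^2$ is positive for $\abs{\alpha}<\half$, so $\phi_\alpha$ is convex wherever positive and therefore stays positive; at $\abs{\alpha}=\half$ one gets the borderline positive solution, while for $\abs{\alpha}>\half$ the solution oscillates and positivity fails. Thus the $W_\alpha$ with $\alpha\in[-\half,\half]$ are precisely the positive separated solutions vanishing on $\set{y=0}$, and $W_0$ is the unique $x$-independent one. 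The substantive step --- the content of \cite{CaoHeMartin} --- is to identify these with the minimal Martin boundary of $(\Sigma^2,L)$: one computes the Martin compactification, checks that the Dirichlet condition on $\set{y=0}$ discards every boundary kernel singular there, and concludes, via the Martin integral representation, that every admissible $W$ can be written $W(x,y)=\int_{-1/2}^{1/2}e^{\alpha x}\phi_\alpha(y)\,d\mu(\alpha)$ for a unique positive measure $\mu$ on $[-\half,\half]$.

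Finally I would feed the growth condition into this representation to force $\mu=c\,\delta_0$. The asymptotics $\phi_\alpha\sim e^{\sqrt{1/4-\alpha^2}\,y}$ give $\phi_\alpha(y)/\sqrt{\sinh y}\to 0$ for every $\alpha\neq0$, while $W_0/\sqrt{\sinh y}\to\sqrt2$. Since $W/\sqrt{\sinh y}$ is positive and non-decreasing in $y$, its limit as $y\to\infty$ cannot be $0$; by dominated convergence that limit is a positive multiple of $\mu(\set0)$, so $\mu$ charges $0$. To remove the remaining mass I would differentiate the representation in $y$ and exploit the $x$-freedom: with $\psi_\alpha=\phi_\alpha/\sqrt{\sinh y}$ the hypothesis reads $\int_{-1/2}^{1/2}e^{\alpha x}\psi_\alpha'(y)\,d\mu(\alpha)\ge0$ for all $x,y$. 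Taking $y$ large, where $\psi_\alpha'(y)<0$ uniformly for $\alpha$ bounded away from $0$, and letting $x\to+\infty$ (resp. $x\to-\infty$) so that the integral is dominated by $\sup\mathrm{supp}\,\mu$ (resp. the infimum) via a Laplace-type estimate, any mass at $\alpha\neq0$ would drive the integral to $-\infty$, a contradiction. Hence $\mathrm{supp}\,\mu=\set0$ and $W=cW_0$.

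The main obstacle is the middle step: rigorously determining the Martin boundary of $(\Sigma^2,\Delta_\Sigma-1)$ and showing that the Dirichlet condition on $\set{y=0}$ confines the representing measure to the arc of separated solutions $\set{W_\alpha:\alpha\in[-\half,\half]}$. The positivity range $\abs{\alpha}\le\half$ hinges on the sharp bound $\inf\lambda=\frac14$, and securing the uniformity of $\psi_\alpha'(y)<0$ (so that the Laplace asymptotics in $x$ are legitimate) is the remaining delicate point. As a sanity check I would keep in mind the equivalent ground-state substitution $u=W/W_0$, which turns $LW=0$ into the weighted-harmonic equation $\dive(W_0^2\nabla u)=0$; in this packaging the theorem becomes the Liouville-type assertion that the only positive such $u$ compatible with the monotonicity condition is constant.
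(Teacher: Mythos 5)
Your proposal follows essentially the same route as the paper: the paper does not actually prove Theorem \ref{thm:poseigenfunction} itself but quotes it from \cite{CaoHeMartin}, whose strategy---determining the Martin compactification of $(\Sigma^2,\Delta_\Sigma-1)$ and its kernel functions, identifying $W_0$ as the unique kernel compatible with the vanishing condition and the inequality (\ref{eqn:ineqnW}), and concluding via the Martin integral representation of positive eigenfunctions---is exactly the skeleton you describe. Your separation-of-variables analysis and the Laplace-type argument forcing the representing measure to concentrate at $\alpha=0$ are a reasonable fleshing-out of that same outline, with the hard step (the Martin boundary computation) deferred to \cite{CaoHeMartin} just as the paper defers it.
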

\begin{remark}
In \cite{CaoHeMartin} we determined the Martin compactification of $\Sigma^2$ with respect to the operator $\Delta_{\Sigma} - 1$ and the Martin kernel function at each boundary point. The function $W_0$ is the unique kernel function that satisfies the vanishing condition and the inequality in (\ref{eqn:ineqnW}). The uniqueness of $W$ follows from the Martin integral representation of positive eigenfunctions.
\end{remark}

\medskip{}
% -------------------------------------------------------------------------------------------------

\section{First variation of the 3D cigar soliton}

In this section we show that a three-dimensional gradient steady Ricci soliton satisfying the circle symmetry conditions admits a special coordinate system on an open dense subset such that the metric has the diagonal form. Then, using these coordinates, we reduce the system of differential equations of the first variation of 3D cigar soliton $N^2\times \Real$ to a single differential equation. 

\begin{proposition}\label{prop:lengthelem}
Let $(M^3, g,f)$ be a simply connected gradient steady Ricci soliton. Suppose that the scalar curvature $R$ attains its maximum at  $O \in M$ and the metric $g$ admits a non-trivial Killing vector field $X$. Then on an open dense subset $M_0 \subset M$, the length element can be written as
\begin{equation}\label{eqn:dsalphabeta}
ds^2 = e^{2\alpha} dr^2 + e^{2\beta} dx^2 + r^2 d\theta^2.
\end{equation}
Here $r^2 = g(X, X)$ with $X = \pd_\theta$, $x$ is another coordinate with $g(\pd_x, \pd_\theta) = 0$, and $\alpha, \beta$ are functions in $r$ and $x$.
\end{proposition}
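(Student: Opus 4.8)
The plan is to organize the geometry around the isometric circle action generated by $X$ and to produce adapted coordinates on the regular part. First I would record the elementary consequences of $X$ being Killing: since $\calL_X g = 0$, the scalar curvature $R$ and the function $r^2 := g(X,X)$ are constant along the flow of $X$, and the antisymmetry of $\nabla X$ gives $X(r^2) = 2g(\nabla_X X, X) = 0$. Hence $X(r)=0$ and $\nabla r$ is horizontal, i.e. $g(\nabla r, X)=0$, so $r$ descends to the orbit space. Let $M_0\subset M$ be the open dense set where $X\neq 0$ (its complement is the fixed-point set of the action, a lower-dimensional totally geodesic subset); shrinking to where in addition $dr\neq 0$, I use the flow parameter of $X$ as the coordinate $\theta$, so that $X=\pd_\theta$ and $g(\pd_\theta,\pd_\theta)=r^2$ by definition.

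Next I would show that $f$ is invariant under $X$. Applying $\calL_X$ to $\Ric=\nabla^2 f$ and using $\calL_X g=0$ yields $\nabla^2(Xf)=0$, so $\nabla(Xf)$ is a parallel vector field. By the structure theorem it suffices to treat the positively curved case (the other being the $3$D cigar itself), and there a nonzero parallel field would force a de Rham splitting, which positive curvature excludes; thus $\nabla(Xf)=0$, and evaluating at the maximum point $O$ of $R$, where $\nabla f=0$, gives $Xf\equiv 0$. In particular $\nabla f\perp X$, so $\nabla f$ is horizontal.

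The heart of the argument is to prove that the horizontal distribution $X^\perp$ is integrable, for this is exactly what permits a diagonal form. For horizontal $Y,Z$ the Killing identity reduces integrability to the vanishing of the twist $\Phi(Y):=\mathrm{hor}(\nabla_Y X)$, which by antisymmetry of $\nabla X$ is a skew endomorphism of the $2$-plane $X^\perp$, hence $\Phi=\tau\, J$ for the rotation $J$ and a scalar $\tau$. Using $Xf\equiv 0$ one computes, for horizontal $Y$, $\Ric(X,Y)=\nabla^2 f(X,Y)=-g(\nabla_Y X,\nabla f)=-g(\Phi(Y),\nabla f)$, so the horizontal part of $\Ric(X)$ is itself proportional to $\tau$. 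Feeding this into a Killing-field identity of the form $d\tau=2\star\!\left(X^\flat\wedge\Ric(X)^\flat\right)$ shows that $d\tau=\tau\,\omega$ for a smooth $1$-form $\omega$, a linear homogeneous first-order transport equation; hence $\tau$ vanishes identically once it vanishes at a single point of the connected set $M_0$, and the rotation axis (where $X=0$ forces $\tau=0$) supplies such a point. I expect this step — pinning down the correct curvature identity, deriving the transport equation for $\tau$, and anchoring it at the axis — to be the main obstacle.

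Finally, with $X^\perp$ integrable I would pass to the quotient surface $\Sigma=M_0/\sS^1$, on which $r$ is smooth with $dr\neq 0$; choosing orthogonal coordinates $(r,x)$ on $\Sigma$ (always possible on a surface once one nonsingular function is fixed) and pulling $r,x$ back as $X$-invariant functions gives, together with $\theta$, a chart on an open dense subset. The connection $1$-form of the bundle is closed by integrability, hence exact since $M$ is simply connected, so after the gauge change $\theta\mapsto\theta-\phi(r,x)$ the cross terms $g(\pd_r,\pd_\theta)$ and $g(\pd_x,\pd_\theta)$ vanish, while $g(\pd_r,\pd_x)=0$ holds by the choice of orthogonal coordinates. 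Writing $e^{2\alpha}=g(\pd_r,\pd_r)$ and $e^{2\beta}=g(\pd_x,\pd_x)$, both $X$-invariant and hence functions of $(r,x)$ only, yields the claimed form $ds^2=e^{2\alpha}dr^2+e^{2\beta}dx^2+r^2 d\theta^2$.
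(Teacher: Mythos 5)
Your overall route differs genuinely from the paper's: the paper obtains integrability of $X^\perp$ by showing that, away from the fixed-point set $Z$ and the closed set $S$ where $\nabla R$ and $\nabla f$ are parallel, the two $X$-invariant gradients $\nabla f$ and $\nabla R$ span $X^\perp$ (the case $S=M$ being handled by Guo's rigidity theorem). Your plan instead runs a twist--transport argument, which is a legitimate and potentially cleaner strategy --- but the step you yourself flag as the main obstacle contains a genuine gap, and as stated it fails. The correct 3D Killing identity holds for the \emph{unnormalized} twist scalar $\psi:=\star\left(X^\flat\wedge dX^\flat\right)=g(X,\mathrm{curl}\,X)=2\abs{X}\,\tau$, namely $d\psi=\pm 2\star\left(X^\flat\wedge \Ric(X)^\flat\right)$; it is not valid for your $\tau$. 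Rewritten for $\tau$, the transport form acquires the extra term $-\tfrac12\, d\log\abs{X}^2$, which blows up along the axis $Z$. Consequently your anchoring argument breaks down twice over: the axis is not a point of $M_0$ (where $\tau$ is even defined and where the transport equation holds), and a homogeneous transport equation whose coefficient $1$-form is singular at the anchor point does not propagate vanishing (note also that your $M_0$, after removing $\set{dr=0}$, is not obviously connected). A concrete illustration: for the smooth metric $d\rho^2+\rho^2 d\theta^2+\left(dz+\rho^2 d\theta\right)^2$ on $\Real^3$ (not a soliton, but your step's logic never uses more than a Killing field and an axis), the twist is $\tau=\rho^3/\sqrt{1+\rho^2}$, which vanishes at the axis and satisfies $d\tau=\tau\,\omega$ with $\omega\sim 3\,d\rho/\rho$, yet is not identically zero.

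The gap is fixable, and the fix shows why the unnormalized twist is the right object. Since $f$ is $X$-invariant (your argument for this is fine), $[X,\nabla f]=0$, so the soliton equation gives $\Ric(X)=\nabla_X\nabla f=\nabla_{\nabla f}X=\tfrac12\,\mathrm{curl}\,X\times\nabla f$, using the identity $\nabla_Y X=\tfrac12\,\mathrm{curl}\,X\times Y$ valid for Killing fields on oriented $3$-manifolds. Hence, by the triple product formula and $g(X,\nabla f)=0$, one gets $\star\left(X^\flat\wedge\Ric(X)^\flat\right)=\left(X\times\Ric(X)\right)^\flat=-\tfrac12\,\psi\,df$, and the Killing identity becomes $d\psi=\mp\psi\,df$ \emph{globally on} $M$, i.e.\ $d\left(e^{\pm f}\psi\right)=0$. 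Now $\psi$ is smooth on all of $M$ and vanishes on $Z$ (where $X=0$), while $e^{\pm f}>0$; since $M$ is connected, the constant $e^{\pm f}\psi$ is zero, so $\psi\equiv0$ and $X^\perp$ is integrable wherever $X\neq 0$ --- no quotient-space divergence computation, no Gronwall argument, and no limiting statement at the axis needed. Two smaller gaps you should also close: you must show that $\set{dr\neq0}$ is \emph{dense} in $\set{X\neq0}$, since the proposition requires $M_0$ open and dense (if $\abs{X}^2$ were locally constant, then $\nabla_XX=-\tfrac12\nabla\abs{X}^2=0$ together with $Xf=0$ would give $\Ric(X,X)=0$ there, contradicting positive curvature); and you should justify the assertion $\nabla f(O)=0$, which follows from $\nabla R=2\Ric(\nabla f)$ and positive-definiteness of $\Ric$ at the maximum point of $R$.
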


\begin{proof}
First note that the statement holds for the 3D cigar soliton, see equation (\ref{eqn:dscigar3d}), and the Bryant soliton. In both cases, we can take $M_0 = M$.

If the metric $g$ is reducible, then it is isometric to the 3D cigar soliton. So we may assume that $(M, g)$ has positive sectional curvature $K_M>0$ everywhere for the rest of the proof. Since $R$ attains its maximum at $O \in M$, we have
\[
0 = g\left(\nabla R,\nabla f\right) (O) = -2 \Ric \left(\nabla f, \nabla f\right)(O)
\]
and it follows that $O$ is a critical point of $f$. Since $\nabla^2 f = \Ric$ is positive definite, $f$ is strictly convex and it follows that $O$ is the unique critical point of $f$ and $f(O)$ is the absolute minimum of $f$ on $M$. This also shows that $O$ is the unique critical point of $R$.

The Killing vector field $X$ generates a local isometric $\sS^1$-action and 
\begin{equation*}
r^2 = g(X, X)\geq 0.
\end{equation*}
Let $Z = \set{p \in M : X(p) = 0}$. It follows that $Z$ is the disjoint union of complete geodesics, see for example, \cite[Theorem 5.1]{Kobayashi}. Since the scalar curvature $R$ is invariant under the $\sS^1$-action, the origin $O$ is a fixed point, i.e., $O \in Z$. Let $\gamma(t)$($t\in \Real$) be the normal geodesic passing through the point $O$ with $\gamma(0) = O$. 

\begin{claim} 
$Z  = \set{\gamma(t) : t \in \Real}$. 
\end{claim}

Since $\nabla^2 f=\Ric$ is positive definite, the set $M^c = \set{p \in M: f(p) \leq c}$ of $f$ is compact and strictly convex, see \cite[Proposition 2.1 and 2.5]{BishopONeill} or \cite[Proposition 2.1]{CaoChenLCF}. It follows that neither $\gamma([0, \infty))$ nor $\gamma((-\infty, 0])$ can stay in an $M^c$ for some $c< \infty$ and thus $\gamma(\Real)$ intersects each $\pd M^c$ with $c > \min f$ at least at two points. Since $g$ is irreducible, we have $D_X f = 0$, see for example \cite{PetersenWylieRSsymmetry}, i.e., $f$ is invariant under the $\sS^1$-action and it induces an isometric action on the compact level surface $\pd M^c = \set{p\in M : f(p) =c}$. The fixed point set of this induced action is $Z \cap \pd M^c$ that consists of isolated points. Since $\pd M^c$ is diffeomorphic to the sphere $\sph^2$, the Euler characteristic is $\chi (\pd M^c) = 2$. On the other hand, $\chi (\pd M^c) = $ number of points in $Z\cap \pd M^c$ so $\gamma(t)$ intersects $\pd M^c$ at exactly $2$ points. Since $\set{M^{c_i}}_{i=1}^\infty$ with $\lim_{i\To \infty} c_i = \infty$ is an exhaustion of $M$, there is no more geodesic in $Z$. So we have proved the claim. 

Let 
\[
S = \set{p \in M: \nabla R \text{ and }\nabla f \text{ are parallel}}.
\]
It follows that $S$ is a closed subset in $M$. Note that $S \ne \emptyset$ as for every $c> \min f$, the set $S$ contains the points on the level set $\pd M^c$ where $R$ restricted to $\pd M^c$ achieves its extreme values. If $S = M$, then $R$ and $f$ share the same level sets and $(M, g)$ is isometric to the Bryant soliton, see \cite{Guo}. So we assume that $S\subsetneq M$ and $M_0 = M\backslash \left(S\cup Z\right)$ is open and dense. It follows that the orthogonal distribution $\mathfrak{D}$ of $X$ on $M_0$ is integrable. Denote by $\theta \in [0, 2\pi)$ the coordinate on the $\sS^1$-orbit with $X = \pd_\theta$.  Choose the local isothermal coordinates $\set{u, v}$ and then the metric $g$ restricted to $\mathfrak{D}$ is conformal to the Euclidean metric. So the length element of $g$ on $M_0$ can be written as
\begin{equation}
ds^2 = e^{2w}\left(du^2 + dv^2\right) + r^2 d\theta^2
\end{equation}
for some $w$, with $w$ and $r$ being functions in $u, v$. In terms of the coordinates $\set{u, v, \theta}$,  we have the following vanishing Christoffel symbols:
\[
\Gamma^3_{11} = \Gamma^3_{12} = \Gamma^3_{21} = \Gamma^3_{22} = 0.
\] 
It follows that the distribution $\mathfrak{D}$ is totally geodesic and $X = \pd_\theta$ is an eigenvector field of $\Ric$. Note that 
\[
\Gamma^1_{33} = - r e^{-2w} r_u \quad \text{and}\quad \Gamma^2_{33} = - r e^{-2w} r_v.
\]
So we have 
\begin{eqnarray*}
\nabla_3 \nabla_3 f = r e^{-2w}\left(r_u f_u + r_v f_v\right) = r g(\nabla f, \nabla r).
\end{eqnarray*}
Since $\nabla_3\nabla_3 f = R_{33} > 0$ on $M$, we have $\nabla r\ne 0$ on $M_0$ so that we can choose $r$ as a coordinate function and $x$ be another coordinate with $g(\pd_r, \pd_x) = 0$.  The metric $g$ then has the desired form in these coordinates $\set{r,x,\theta}$. 
\end{proof}

In the following we consider deformations of the 3D cigar soliton $(M^3, g, f)=N^2\times \Real$. The metric $g$ has the length element in equation (\ref{eqn:dscigar3d}). Denote $\pd_r=\frac{\pd}{\pd r}$, $\pd_x =\frac{\pd}{\pd x}$ and $\pd_\theta =\frac{\pd}{\pd \theta}$ the vector fields. For a covariant $2$-tensor $h$, the covariant derivative is denoted by 
\[
\nabla_i h_{jk} = \nabla h\left(\frac{\pd}{\pd x^i}, \frac{\pd}{\pd x^j}, \frac{\pd}{\pd x^k}\right),
\]
with $\set{x^1, x^2,x^3} = \set{r,x,\theta}$, and other covariant derivatives are denoted similarly. The partial derivatives of smooth functions are denoted by, for example,
\[
v_r =\pd_r v, \quad v_{rr} =\pd^2_r v, \quad v_{rx} = \pd^2_{rx} v=\pd_r\pd_x v
\]
and so on for $v\in C^\infty(M)$.

\smallskip

Let $(M^3(t), g(t),f(t))$$(0\leq t < \eps)$ be a deformation of the 3D cigar soliton $(M^3,g,f)$ satisfying the circle symmetry conditions. We normalize the metrics $g(t)$ by rescaling if necessary such that 
\begin{equation}\label{eqn:2ndBianchigt}
\Delta_t f(t) + \abs{\nabla f(t)}^2_t = 1.
\end{equation}

\begin{proposition}\label{prop:hVEV}
Let $V = \delta F$ be the first variation of $F = e^f$ on the 3D cigar soliton. Then, the nonzero components of the first variation $h_{ij} =\delta g_{ij}$ are given by
\begin{eqnarray}
h_{11} & = & \frac{32r}{4+r^2} V_r - \frac{64 r^2}{16-r^4} V \label{eqn:h11V}, \\
h_{22} & = & \frac{2r(4-r^2)^2}{4+r^2}V_r - \frac{2(4-r^2)(4+3r^2)}{4+r^2} V. \label{eqn:h22V}
\end{eqnarray}
Moreover, $V(r,x)$ satisfies the equation $E(V) = 0$ with 
\begin{eqnarray}
E(V) & = & (r^2 + 4)\left[(4-r^2)^2 V_{rr} + 16 V_{xx} \right] - \left(5r^4 + 48 r^2 - 16\right)\left(4-r^2 \right) \frac{V_r}{r} \nonumber \\
& & + 4\left(r^4 + 16 r^2 - 16\right)V. \label{eqn:EV}
\end{eqnarray}
\end{proposition}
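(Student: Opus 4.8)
The plan is to use the circle symmetry to bring every metric in the family into the normal form of Proposition \ref{prop:lengthelem}, and then differentiate the soliton equation $\Ric = \nabla^2 f$ component by component, isolating the components that happen to be only first order in the metric. Applying Proposition \ref{prop:lengthelem} to each $g(t)$ (and allowing a $t$-dependent diffeomorphism, which alters $h$ only by a Lie-derivative term $\calL_Y g$, a non-essential deformation), I would arrange that in a fixed coordinate system $(r,x,\theta)$ with $X=\pd_\theta$ one has
\[
g(t) = e^{2\alpha_t}\,dr^2 + e^{2\beta_t}\,dx^2 + r^2\,d\theta^2, \qquad r^2 = g(t)(X,X).
\]
Because the third coordinate is pinned by $g_{33}(t)=r^2$ and the metric stays diagonal with all functions independent of $\theta$, the variation $h$ is automatically diagonal with $h_{33}=0$ and only $h_{11},h_{22}$ nonzero. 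The dictionary at the cigar is $\delta\alpha = \frac{(4-r^2)^2}{32}h_{11}$, $\delta\beta=\half h_{22}$, and $\delta f = (4-r^2)V$ with $V=\delta F$.

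Next I would compute the Ricci tensor of this ansatz and exploit the fact that two of its components are only first order in $\alpha,\beta$: a Christoffel computation gives $R_{33}=r e^{-2\alpha}(\alpha_r-\beta_r)$ and $R_{12}=\alpha_x/r$, while $(\nabla^2 f)_{33}=r e^{-2\alpha} f_r$ and $(\nabla^2 f)_{12}=f_{rx}-\alpha_x f_r-\beta_r f_x$. Hence the $(3,3)$ and $(1,2)$ components of $\Ric=\nabla^2 f$ reduce, for every $t$, to $\alpha_r-\beta_r=f_r$ and $\alpha_x/r=f_{rx}-\alpha_x f_r-\beta_r f_x$. Linearizing at the 3D cigar (where $\alpha,f$ are radial, $\beta\equiv 0$, and $f_{0,r}=\alpha_{0,r}=\frac{2r}{4-r^2}$) yields $(\delta\alpha)_r-(\delta\beta)_r=(\delta f)_r$ and $(\delta\alpha)_x=\frac{r(4-r^2)}{4+r^2}(\delta f)_{rx}$. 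Rewriting in terms of $V$ and integrating this first-order system (in $x$, then in $r$) should produce exactly the formulas (\ref{eqn:h11V}) and (\ref{eqn:h22V}).

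With $h$ so expressed, the single scalar equation for $V$ should come from the normalization $\Delta_t F(t)=F(t)$ in (\ref{eqn:2ndBianchiFt}). Differentiating at $t=0$ gives $\Delta V+(\delta\Delta)F-V=0$, where $(\delta\Delta)F=-\langle h,\Hess F\rangle-\langle\dive h-\half\nabla\Tr h,\nabla F\rangle$ is the variation of the Laplacian applied to the background $F$. A good check that this is the right route is that, since $\nabla F$ is radial and $h$ depends on $V$ only through $V$ and $V_r$, the two potentially second-order contributions to $(\delta\Delta)F$ cancel: the $V_{rr}$ terms from $\langle\dive h,\nabla F\rangle$ and from $\langle\half\nabla\Tr h,\nabla F\rangle$ agree, so $(\delta\Delta)F$ is in fact first order in $V$. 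Therefore the linearized normalization has the same principal part as $\Delta V$, and after substituting (\ref{eqn:h11V}), (\ref{eqn:h22V}) and clearing the positive factor $16(r^2+4)$ it should become precisely $E(V)=0$.

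I expect the main obstacle to be twofold. First, the first-order relations from the $(3,3)$ and $(1,2)$ components determine $\delta\alpha,\delta\beta$ only up to integration functions $C(r)$ and $D(x)$; eliminating these (so that (\ref{eqn:h11V}), (\ref{eqn:h22V}) hold exactly) requires feeding in the remaining second-order components $(1,1)$ and $(2,2)$ of $\Ric=\nabla^2 f$ together with smoothness across the rotation axis $\{r=0\}$, and verifying compatibility of the resulting overdetermined system. Second, assembling $(\delta\Delta)F$ and checking that the lower-order terms reproduce exactly the coefficients $-(5r^4+48r^2-16)(4-r^2)/r$ and $4(r^4+16r^2-16)$ in $E(V)$ is a lengthy but routine computation, made tractable only by the cancellation of second-order terms noted above.
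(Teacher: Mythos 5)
Your proposal is correct and is essentially the paper's own proof in different packaging: your linearized $(1,2)$ and $(3,3)$ relations are exactly the paper's component equations $E_{12}=0$ and $E_{33}=0$, your route to $E(V)=0$ via linearizing $\Delta_t F(t)=F(t)$ is the paper's use of the linearized normalization $B=0$, and your plan for killing the integration functions $C(r)$, $D(x)$ (the remaining components $(1,1)$, $(2,2)$, the gauge field $b(x)\pd_x$, and regularity $h_{11}(0,x)=0$ at the axis) is precisely how the paper proves its function $A(r)$ vanishes. The only genuine novelty is organizational: you observe that $R_{33}$ and $R_{12}$ of the diagonal ansatz are exactly first order in $(\alpha,\beta)$, so those two linearized equations can be read off from the nonlinear metric directly, whereas the paper obtains the identical equations by evaluating the tensor formulas of Proposition \ref{prop:1stvariation} componentwise.
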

\begin{remark}
From the $\sS^1$-symmetry, $V(r,x)$ is an even function in $r$ and $V_r(0,x) = 0$ for any $x\in \Real$.
\end{remark}
\begin{proof}
Let $v=\delta f$, then we have
\[
v = e^{-f}V =  (4-r^2)V.
\]
From Proposition \ref{prop:lengthelem} the first variation $h_{ij}$ has two nonzero components 
\[
h_1(r,x) = h_{11} \quad \text{and}\quad h_2(r,x) = h_{22}.
\]
For fixed $x$, the following length element
\[
ds^2 = e^{2\alpha} dr^2 + r^2 d\theta^2
\]
gives a smooth metric on the surface with $\set{r,\theta}$-coordinate. In particular, we have $e^{2\alpha(0, x)} = 1$, i.e., $g_{11}(t)= 1$ at $r=0$. It follows that $h_1(0,x) = 0$. Let $Y = b(x) \pd_x$ be a smooth vector field. Since the Lie derivative $\mathscr{L}_Y g$ has the only nonzero component
\[
\left(\mathscr{L}_{b(x)\pd_x} g\right)_{22} = 2b'(x),
\]
we may assume that $h_2$ does not contain the summand a single variable function in $x$.
 
The covariant derivatives $\nabla_j h_{kl}$ have the following nonzero components
\begin{eqnarray*}
\nabla_1 h_{11} = \pd_r h_1 - \frac{4r}{4-r^2} h_1 & & \nabla_1 h_{22} = \pd_r h_2 \\
\nabla_2 h_{11} = \pd_x h_1 & & \nabla_2 h_{22} = \pd_x h_2 
\end{eqnarray*}
and
\begin{equation*}
\nabla_3 h_{13} = \nabla_3 h_{31} = \frac{r(4-r^2)^2}{16} h_1.  
\end{equation*}
Since the metric $g$ is in the diagonal form, we only need the terms $\nabla_k \nabla_l h_{ij}$ with $k=l$ to compute $\Delta h_{ij}$. The nonzero components of $\nabla_k \nabla_k h_{ij}$'s are given by
\begin{eqnarray*}
\nabla_1 \nabla_1 h_{11} & = & \pd_r^2 h_1 -\frac{10 r}{4-r^2}\pd_r h_1 - \frac{4(4-5r^2)}{(4-r^2)^2} h_1 \\
\nabla_1 \nabla_1 h_{22} & = & \pd_r^2 h_2 - \frac{2r}{4-r^2} \pd_r h_2 \\
\nabla_2 \nabla_2 h_{11} & = & \pd_x^2 h_1 \\
\nabla_2 \nabla_2 h_{22} & = & \pd_x^2 h_2 \\ 
\nabla_3 \nabla_3 h_{11} & = & \frac{r(4-r^2)^2}{16}\pd_r h_1 - \frac{16-r^4}{8} h_1 \\
\nabla_3 \nabla_3 h_{22} & = & \frac{r(4-r^2)^2}{16}\pd_r h_2 \\
\nabla_3 \nabla_3 h_{33} & = & \frac{r^2(4-r^2)^4}{128} h_1. 
\end{eqnarray*}
It follows that $\Delta h_{ij}$ has the following nonzero components: 
\begin{eqnarray*}
\Delta h_{11} & = & \frac{(4-r^2)^2}{16}\pd_r^2 h_1 + \pd_x^2 h_1 +\frac{(4-r^2)(4-11 r^2)}{16r}\pd_r h_1 + \frac{11 r^4 - 8r^2 - 16}{8r^2} h_1 \\
\Delta h_{22} & = & \frac{(4-r^2)^2}{16}\pd_r^2 h_2 + \pd_x^2 h_2 + \frac{(4-r^2)(4-3r^3)}{16r} \pd_r h_2 \\
\Delta h_{33} & = & \frac{(4-r^2)^4}{128} h_1.
\end{eqnarray*}

In the following we sketch the calculation of the nonzero components of other relevant tensors. Since 
\[
\nabla f = \frac{r(4-r^2)}{8}\pd_r,
\]
the nonzero components of $\nabla_{\nabla f}h$ are given by
\begin{eqnarray*}
\nabla_{\nabla f} h_{11} & = & \frac{r(4-r^2)}{8} \pd_r h_1 - \frac{r^2}{2} h_1 \\
\nabla_{\nabla f} h_{22} & = & \frac{r(4-r^2)}{8} \pd_r h_2.
\end{eqnarray*}
The tensor $Rm(h,\cdot)$ has only one nonzero component, 
\[
Rm(h,\cdot)_{33} = \frac{r^2(4-r^2)^3}{128} h_1.
\]
The nonzero components of $\dive_{-f} h = \dive h + h(\nabla f, \cdot)$ are
\begin{eqnarray*}
\left(\dive_{-f} h\right)_1 & = & \frac{(4-r^2)^2}{16}\pd_r h_1 + \frac{(4-r^2)(4-3r^2)}{16r} h_1\\
\left(\dive_{-f} h\right)_2 & = & \pd_x h_2
\end{eqnarray*}
so the nonzero components of $\omega=\dive^*(\dive h + h(\nabla f, \cdot))$ are
\begin{eqnarray*}
\omega_{11} & = & -\frac{(4-r^2)^2}{16} \pd_r^2 h_1 - \frac{(4-r^2)(4-9r^2)}{16 r}\pd_r h_1 + \frac{-15 r^4 + 24 r^2 + 16}{16r^2} h_1\\
\omega_{12} = \omega_{21} & = & -\frac{(4-r^2)^2}{32}\pd^2_{rx}h_1 - \frac{(4-r^2)(4-3r^2)}{32r}\pd_x h_1-\half \pd^2_{rx} h_2 \\
\omega_{22} & = & - \pd^2_x h_2 \\
\omega_{33} & = & -\frac{r(4-r^2)^4}{256}\pd_r h_1 - \frac{(4-r^2)^3(4-3r^2)}{256}h_1.
\end{eqnarray*}
Let
\[
u(r,x) = \Tr h + 2 v = \frac{(4-r^2)^2}{16} h_1 + h_2 + v.
\]
The nonzero components of $\nabla^2\left(\Tr h + v\right)$ are given by
\begin{eqnarray*}
\nabla_1 \nabla_1 u & = & \pd_r^2 u - \frac{2r}{4-r^2} \pd_r u \\
\nabla_1 \nabla_2 u = \nabla_2 \nabla_1 u & = & \pd^2_{rx} u \\
\nabla_2 \nabla_2 u & = & \pd_x^2 u \\
\nabla_3 \nabla_3 u & = & \frac{r(4-r^2)^2}{16}\pd_r u.
\end{eqnarray*}

Let $E_{ij}$ be the components of the left hand side in equation (\ref{eqn:varRS}). We have 
\[
E_{13} = E_{23} = 0.
\]
The component $E_{12}$ is given by 
\begin{equation}\label{eqn:E12vh1}
E_{12} = E_{21} = 2 \pd^2_{rx} v + \frac{r^4 - 16}{16 r} \pd_x h_1.
\end{equation}
So equation $E_{12} = 0$ yields
\begin{equation}\label{eqn:h1vA}
h_1 = \frac{32 r}{16 -r^4} \pd_r v + A(r), 
\end{equation}
where $A(r)$ is an arbitrary function.  The component $E_{33}$ is given by
\begin{equation}\label{eqn:E33vh}
E_{33} = \frac{r(4-r^2)^2}{8} \pd_r v + \frac{r(4-r^2)^2}{16} \pd_r h_2 -\frac{r(4-r^2)^4}{256} \pd_r h_1+ \frac{r^2(4-r^2)^3}{64}h_1.
\end{equation}
Equation $E_{33} = 0$ with the solution of $h_1$ in (\ref{eqn:h1vA}) yields
\begin{equation}\label{eqn:h2vA}
h_2 = \frac{2r(4-r^2)}{4+r^2}\pd_r v - 2v + \frac{(4-r^2)^2}{16} A(r).
\end{equation}
For the other two nonzero components we have
\begin{eqnarray}
E_{11} & = & \frac{2r(4-r^2)}{4+r^2}v_{rrr} + \frac{32r}{16 -r^4} v_{rxx} - \frac{2(5r^4+ 48 r^2 - 16)}{(4+r^2)^2} v_{rr} \nonumber \\
& & + \frac{2(3r^8 + 48 r^6 + 480 r^4-768 r^2 - 256)}{r(4-r^2)(4+r^2)^3}v_r \label{eqn:E11vA}\\
& & + \frac{(4-r^2)^2}{16}A''(r) - \frac{(4-r^2)(4+11r^2)}{16r} A'(r) + \frac{3r^2}{2} A(r), \nonumber
\end{eqnarray}
and
\begin{eqnarray}
E_{22} & = & \frac{r(4-r^2)^3}{8(4+r^2)} v_{rrr} +\frac{2r(4-r^2)}{4+r^2} v_{rxx} - \frac{(4-r^2)^2 (r^4 +10 r^2 -8)}{2(4+r^2)^2}v_{rr} + 2 v_{xx} \nonumber \\
& & - \frac{r(4-r^2)^2 (r^2 + 4r+12)(r^2 - 4r+12)}{4(4+r^2)^3} v_r \label{eqn:E22vA} \\
& & +\frac{(4-r^2)^4}{256}A''(r) + \frac{(4-r^2)^3(4-9r^2)}{256r} A'(r) + \frac{(4-r^2)^2 (r^2 - 2)}{16}A(r).\nonumber
\end{eqnarray}
So equation (\ref{eqn:varRS}) is equivalent to $E_{11} = E_{22} = 0$.

A direct computation shows that the left hand side in equation (\ref{eqn:var2ndBianchi}) is given by
\begin{eqnarray*}
B & = & \frac{(4-r^2)^2}{16} v_{rr} + v_{xx} + \frac{16-r^4}{16r} v_r \\
& &  - \frac{r(4-r^2)^3}{256}\pd_r h_1 + \frac{r(4-r^2)}{16}\pd_r h_2 - \frac{(4-r^2)^3}{64} h_1. 
\end{eqnarray*}
Using the solutions of $h_1$ and $h_2$ in (\ref{eqn:h1vA}) and (\ref{eqn:h2vA}) it can be rewritten as
\begin{equation}\label{eqn:BvA}
B = \frac{(4-r^2)^2}{16} v_{rr} + v_{xx} - \frac{(4-r^2)(r^4+32r^2-16)}{16r(4+r^2)} v_r - \frac{(4-r^2)^2}{16} A(r).
\end{equation}
One can solve $v_{rr}$ from the equation $B = 0$ and then substitute it into equations $E_{11} = 0$ and $E_{22} = 0$. It follows that 
\begin{eqnarray*}
A''(r) - \frac{11r^4+ 16r^2 + 16}{r(16-r^4)}A'(r) - \frac{8r^2(4-3r^2)}{(4-r^2)^2(4+r^2)}A(r) & = & 0 \\
A''(r) + \frac{16-9r^4}{r(16-r^4)}A'(r) - \frac{16r^2}{16-r^4}A(r) & = & 0.
\end{eqnarray*}
Subtracting the two equations above yields
\[
(16-r^4)A'(r) - 4r^3 A(r) = 0,
\]
which has the solution
\[
A(r) = \frac{16 A(0)}{16-r^4}.
\]
Since $A(0) = h_1(0,x)=0$, we have $A(r)=0$.  The formulas of $h_{11}$ and $h_{22}$ follows from equations (\ref{eqn:h1vA}) and (\ref{eqn:h2vA}) of $h_1$ and $h_2$ using the function $V$. Equation $E(V) = 0$ follows from equation $B= 0$ in (\ref{eqn:BvA}). This finishes the proof of Proposition 3.2. 
\end{proof}

\begin{proposition}\label{prop:deltaKijV}
Let $K_{ij}$ be the sectional curvature of the plane spanned by $\pd_{i}$ and $\pd_{j}$. Then the first variations are given by
\begin{eqnarray*}
\delta K_{12} & = & \frac{(4-r^2)^3}{16(4+r^2)}\left( 2V - (4-r^2)\frac{V_r}{r}\right)\\
\delta K_{23} & = & \frac{(4-r^2)^2}{16(4+r^2)}\left(-(4-r^2)^2 V_{rr} + \frac{2r^2(4-r^2)(3r^2+20)}{4+r^2}\frac{V_r}{r} - \frac{2(3r^4 + 24 r^2 -16)}{4+r^2}V\right) \\
\delta K_{13} & = & \frac{(4-r^2)^2}{16(4+r^2)}\left((4-r^2)^2 V_{rr} - \frac{(4-r^2)(9r^4+48r^2 -16)}{4+r^2}\frac{V_r}{r} + \frac{4(3r^4 + 16 r^2 -16)}{4+r^2} V\right).
\end{eqnarray*}
\end{proposition}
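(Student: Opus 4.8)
I organize the proof around the identity that, for a diagonal metric, the sectional curvature of the coordinate plane spanned by $\pd_i,\pd_j$ is
\[
K_{ij} = \frac{R_{ijij}}{g_{ii}g_{jj}}, \qquad i \ne j,
\]
with no summation. Linearizing this ratio gives
\[
\delta K_{ij} = \frac{\delta R_{ijij}}{g_{ii}g_{jj}} - K_{ij}\left(\frac{h_{ii}}{g_{ii}} + \frac{h_{jj}}{g_{jj}}\right).
\]
From the Example, the background metric has $g_{11} = \frac{16}{(4-r^2)^2}$, $g_{22}=1$, $g_{33}=r^2$, and the only nonvanishing curvature component is $R_{1313} = \frac{2r^2}{4-r^2}$. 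Hence on the 3D cigar soliton $K_{12}=K_{23}=0$ while $K_{13} = R_{1313}/(g_{11}g_{33}) = \frac{4-r^2}{8}$. Moreover, since $r$ is chosen by $r^2 = g(X,X)$ for every $t$, the component $g_{33}=r^2$ is fixed along the deformation, so $h_{33}=0$; combined with Proposition \ref{prop:hVEV} the only nonzero variations are $h_{11}$ and $h_{22}$. Therefore the zeroth-order correction survives only for the $(1,3)$-plane, and the three formulas collapse to
\[
\delta K_{12} = \frac{(4-r^2)^2}{16}\,\delta R_{1212}, \qquad \delta K_{23} = \frac{1}{r^2}\,\delta R_{2323},
\]
\[
\delta K_{13} = \frac{(4-r^2)^2}{16r^2}\,\delta R_{1313} - \frac{(4-r^2)^3}{128}\,h_{11}.
\]

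The substantive step is to compute the three quantities $\delta R_{1212}$, $\delta R_{2323}$, $\delta R_{1313}$. I would do this through the (convention-independent) variation of the Christoffel symbols,
\[
\delta\Gamma^k_{ij} = \frac{1}{2}g^{kl}\left(\nabla_i h_{jl} + \nabla_j h_{il} - \nabla_l h_{ij}\right),
\]
followed by the linearization $\delta R^{\,l}{}_{ijk} = \nabla_i\,\delta\Gamma^l_{jk} - \nabla_j\,\delta\Gamma^l_{ik}$ and lowering, $\delta R_{mijk} = g_{ml}\,\delta R^{\,l}{}_{ijk} + h_{ml}\,R^{\,l}{}_{ijk}$. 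This is purely mechanical given that the proof of Proposition \ref{prop:hVEV} has already assembled essentially all the needed data: the first covariant derivatives $\nabla_1 h_{11}$, $\nabla_1 h_{22}$, $\nabla_2 h_{11}$, $\nabla_2 h_{22}$, $\nabla_3 h_{13}$, the second covariant derivatives $\nabla_k\nabla_k h_{ij}$, together with the Christoffel symbols and curvature components recorded in the Example. One feeds these into the displayed formulas, keeping in mind that only the diagonal $h_{11},h_{22}$ and the single curvature block $R_{1313}$ are nonzero, which kills most terms.

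Finally I would insert the expressions
\[
h_{11} = \frac{32r}{4+r^2}V_r - \frac{64r^2}{16-r^4}V, \qquad h_{22} = \frac{2r(4-r^2)^2}{4+r^2}V_r - \frac{2(4-r^2)(4+3r^2)}{4+r^2}V
\]
from Proposition \ref{prop:hVEV} (using $16-r^4 = (4-r^2)(4+r^2)$ to clear denominators) and simplify each of $\delta K_{12}$, $\delta K_{23}$, $\delta K_{13}$ to the stated closed forms in $V$, $V_r$, $V_{rr}$. The main obstacle is not conceptual but the volume of algebra: each $\delta R_{ijij}$ is a second-order expression in $h$, and after substituting $h$ in terms of $V$ the terms in $V_{rrr}$, $V_{rxx}$ and $V_{xx}$ must cancel (they should, since the displayed answers contain no third derivatives and no $x$-derivatives), so careful bookkeeping of the rational coefficients is essential. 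As a consistency check I would verify that $\delta K_{12}+\delta K_{13}+\delta K_{23} = \tfrac{1}{2}\delta R$ matches the linearized scalar curvature, which provides an independent test of the three formulas.
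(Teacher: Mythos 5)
Your setup coincides with the paper's: since the background metric is diagonal with $K_{12}=K_{23}=0$ and $K_{13}=\frac{4-r^2}{8}$, linearizing $K_{ij}=R_{ijij}/(g_{ii}g_{jj})$ reduces everything to the components $\delta R_{ijij}$, with a zeroth-order correction $-K_{13}\left(h_{11}/g_{11}+h_{33}/g_{33}\right)$ surviving only in the $(1,3)$-plane; the paper likewise computes $\delta R_{1212}$ from the first variation formula of the Riemann tensor (citing \cite[Theorem 1.174]{Besse}, equivalent to your Christoffel-variation route) and then substitutes $h_{11},h_{22}$ in terms of $V$.

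However, there is a genuine gap at your final step. You assert that once $h$ is written in terms of $V$, the terms in $V_{rrr}$, $V_{rxx}$ and $V_{xx}$ ``must cancel'' as a matter of algebra, because the stated answers contain no such derivatives. They do not cancel identically: the formulas of the proposition are \emph{not} identities in an arbitrary function $V(r,x)$. Concretely,
\[
\delta R_{1212}=-\half\left(\nabla_2\nabla_2 h_{11}+\nabla_1\nabla_1 h_{22}\right)
=-\half\left(\pd_x^2h_1+\pd_r^2h_2-\frac{2r}{4-r^2}\pd_r h_2\right)
\]
contains the block
\[
\frac{32r}{4+r^2}V_{rxx}-\frac{64r^2}{16-r^4}V_{xx}+\frac{2r(4-r^2)^2}{4+r^2}V_{rrr}
\]
with nonvanishing coefficients, and nothing else in the expression cancels it. These terms are eliminated only by invoking the constraint $E(V)=0$ of Proposition \ref{prop:hVEV}, equation (\ref{eqn:EV}) --- which $V$ satisfies because it is the first variation of the normalized potential --- together with its $\pd_r$-derivative, so as to trade $V_{xx}$ and $V_{rxx}$ for $r$-derivatives and cancel the resulting $V_{rrr}$ terms. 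This substitution is precisely how the paper passes from the raw expression for $\delta R_{1212}$ to the closed form $-\frac{(4-r^2)^2}{r(4+r^2)}V_r+\frac{2(4-r^2)}{4+r^2}V$; without it, your ``purely mechanical'' bookkeeping will not terminate in the stated formulas. The same caveat applies to your proposed consistency check $\delta K_{12}+\delta K_{13}+\delta K_{23}=\half\,\delta R$: it, too, holds only modulo $E(V)=0$.
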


\begin{proof}
These identities follow from the first variation formula of Riemann tensors (see for example \cite[Theorem 1.174]{Besse}). We only compute $\delta K_{12}$ here, as the other two formulas follow by a similar computation. 

Since $R_{1212} = 0$, we have
\[
\delta K_{12} = g^{11} g^{22} \delta R_{1212} = \frac{(4-r^2)^2}{16} \delta R_{1212}.
\]
On the other hand, 
\begin{eqnarray*}
\delta R_{1212} & = & - \half \left(\nabla_2 \nabla_2 h_{11} + \nabla_1 \nabla_1 h_{22}\right) \\
& = & - \half \left(\pd_r^2 h_2 - \frac{2r}{4-r^2}\pd_r h_2 + \pd_x^2 h_1 \right) \\
& = & - \half \left\{\frac{32r}{4+r^2}V_{rxx} - \frac{64r^2}{16-r^4}V_{xx} + \frac{2r(4-r^2)^2}{4+r^2}V_{rrr}\right. \\
& & - \frac{2(4-r^2)(11r^4+72 r^2 - 16)}{(4+r^2)^2}V_{rr} + \frac{4r(15r^6+ 172 r^4+ 528 r^2-704)}{(4+r^2)^3}V_r \\
& &\left. - \frac{4(9r^8+96 r^6+352 r^4-1536 r^2+256)}{(4-r^2)(4+r^2)^3}V\right\}.
\end{eqnarray*}
Using the equation $E(V) = 0$, it can be simplified as
\[
\delta R_{1212} = -\frac{(4-r^2)^2}{r(4+r^2)} V_r + \frac{2(4-r^2)}{4+r^2}V.
\]
Thus,  we have
\[
\delta K_{12} = \frac{(4-r^2)^3}{16(4+r^2)}\left(2V - (4-r^2)\frac{V_r}{r}\right),
\]
which gives the desired formula. 
\end{proof}

\medskip{}
% -------------------------------------------------------------------------------------------------

\section{Proof of Theorem \ref{thm:main}}

In this section we derive the differential equation of the first variation of $K_{12}$, see Proposition \ref{prop:Wequation}, and then we prove our main result Theorem \ref{thm:main}. 

\smallskip{}

First we rewrite equation $E(V) = 0$ in Proposition \ref{prop:hVEV} and the first variations $\delta K_{ij}$ in Proposition \ref{prop:deltaKijV} using different variables. A useful method, called the Liouville transformation, in the second order differential equation  eliminates the first order terms, see \cite{Liouville}.  For our equation $E(V)=0$ the transformation is given by 
\begin{eqnarray}
\xi & = & \log(2+r) -\log(2-r) \label{eqn:xir}, \\
Y(\xi, x) & = & \frac{2(4-r^2)\sqrt{r(4-r^2)}}{4+r^2} V(r,x) \nonumber \\
& = & \frac{16\sqrt 2 e^{\frac 3 2\xi}(e^\xi - 1)^\half}{(e^\xi +1)^{\frac 3 2}(e^{2\xi}+1)} V(r,x). \label{eqn:YV}
\end{eqnarray}
Note that $\xi$ is the distance function in $r$-direction.  

\begin{proposition}\label{prop:YQPDE}
The function $Y$ satisfies the following differential equation, which is equivalent to $E(V)=0$, 
\begin{equation}\label{eqn:PDEYQ}
Y_{\xi\xi} + Y_{xx} - Q(\xi) Y = 0,
\end{equation}
with 
\begin{equation}\label{eqn:Qxi}
Q(\xi) = \frac{e^{8\xi} - 36 e^{6\xi} + 54e^{4\xi} - 36 e^{2\xi} +1}{4(e^{4\xi}-1)^2}.
\end{equation}
In addition, the first variations $\delta K_{ij}$'s in Proposition \ref{prop:deltaKijV} are given by
\begin{eqnarray*}
\delta K_{12} & = & \frac{2\sqrt{2}e^{\frac 3 2 \xi}}{(e^{2\xi}-1)^{\frac 3 2}}\left(- Y_{\xi} - \frac{e^{4\xi} - 6 e^{2\xi} + 1}{2(e^{4\xi}-1)} Y\right) \label{eqn:deltaK12Y} \\
\delta K_{23} & = & \frac{\sqrt{2}e^{\frac 1 2 \xi}}{(e^{2\xi}-1)^{\frac 1 2}} \left(- Y_{\xi\xi} + \frac{4e^{2\xi}}{e^{4\xi}-1} Y_\xi + \frac{e^{8\xi} -28 e^{6\xi} + 6e^{4\xi} -28 e^{2\xi}+1}{4(e^{4\xi}-1)^2} Y \right),\label{eqn:delK23Y}
\end{eqnarray*}
and
\begin{eqnarray*}
\delta K_{13} & = & \frac{\sqrt{2}e^{\frac 1 2 \xi}}{(e^{2\xi}-1)^{\frac 1 2}}\left(-Y_{\xi\xi} + \frac{(e^\xi-1)^3}{(e^\xi +1)(e^{2\xi}+1)}Y_\xi \right. \\
& & \left. + \frac{3e^{8\xi} - 8 e^{7\xi} -36 e^{6\xi} + 40 e^{5\xi} - 14 e^{4\xi} + 40 e^{3\xi} - 36 e^{2\xi} - 8e^{\xi}+3}{4(e^{4\xi}-1)^2} Y\right).
\end{eqnarray*}
\end{proposition}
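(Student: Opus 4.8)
The statement is a \textbf{change-of-variables verification}: equations (\ref{eqn:xir})--(\ref{eqn:YV}) are precisely the Liouville normal-form reduction of $E(V)=0$ in (\ref{eqn:EV}), and once the new equation is in hand the three formulas for $\delta K_{ij}$ are obtained by direct substitution into Proposition \ref{prop:deltaKijV}. The plan is to carry out these two tasks in order. Throughout I would first record the elementary dictionary translating between $r$ and $\xi$: from $e^\xi=(2+r)/(2-r)$ one gets $r=2(e^\xi-1)/(e^\xi+1)$, $4-r^2=16e^\xi/(e^\xi+1)^2$, $4+r^2=8(e^{2\xi}+1)/(e^\xi+1)^2$, and $d\xi/dr=4/(4-r^2)$. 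A useful sanity check is that $\xi$ is odd in $r$ (indeed $\xi\mapsto-\xi$ sends $r\mapsto-r$), so every coefficient that is even in $r$ must become even in $\xi$; in particular $Q(\xi)$ in (\ref{eqn:Qxi}) is invariant under $\xi\mapsto-\xi$, matching the evenness of $V$ in $r$.

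For the PDE (\ref{eqn:PDEYQ}), I would divide $E(V)=0$ by $(r^2+4)(4-r^2)^2$ to put it in the form $V_{rr}+s^2V_{xx}+P(r)V_r+C(r)V=0$ with $s=d\xi/dr=4/(4-r^2)$ and $P,C$ explicit. The choice of $\xi$ is exactly the one that makes the principal part isotropic: since $V_{rr}=s^2U_{\xi\xi}+s'U_\xi$ (where $s'=ds/dr$ and $U$ denotes $V$ regarded as a function of $(\xi,x)$) while $s^2V_{xx}=s^2U_{xx}$ already carries the same factor $s^2$, dividing by $s^2$ yields $U_{\xi\xi}+U_{xx}+\tilde P(\xi)\,U_\xi+(C/s^2)\,U=0$ with $\tilde P=(s'+Ps)/s^2$. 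The amplitude in (\ref{eqn:YV}) is then nothing but the Liouville integrating factor $\phi=\exp(\half\int\tilde P\,d\xi)$: I would verify directly, using the dictionary above, that $\phi_\xi/\phi=\half\tilde P$, which simultaneously explains the appearance of the factor $\sqrt{r(4-r^2)}$ and guarantees that setting $Y=\phi V$ annihilates the $Y_\xi$ term. The surviving zeroth-order coefficient is $-Q=C/s^2-\frac14\tilde P^2-\half\tilde P_\xi$, and the remaining work is to simplify this rational function of $r$, re-express it through the dictionary, and confirm that it collapses to the $Q(\xi)$ of (\ref{eqn:Qxi}). Since $r\mapsto\xi$ is a diffeomorphism of $(0,2)$ onto $(0,\infty)$ and $\phi>0$ there, the two equations are genuinely equivalent.

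For the curvature variations the computation is a substitution. Each $\delta K_{ij}$ in Proposition \ref{prop:deltaKijV} is a rational-in-$r$ combination of $V$, $V_r$, and---for $\delta K_{23}$ and $\delta K_{13}$---$V_{rr}$. Writing $V=Y/\phi$ and converting $\pd_r$ to $s\,\pd_\xi$, one expresses $V_r$ and $V_{rr}$ through $Y,Y_\xi,Y_{\xi\xi}$ and the derivatives of $\phi$, factors out the common prefactor (for $\delta K_{12}$ this is $2\sqrt2\,e^{\frac32\xi}(e^{2\xi}-1)^{-\frac32}$, and $\sqrt2\,e^{\frac12\xi}(e^{2\xi}-1)^{-\frac12}$ for the other two), and simplifies the coefficients with the same dictionary. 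Because $\delta K_{12}$ involves no $V_{rr}$, it reduces to a combination of $Y$ and $Y_\xi$ only, whereas $\delta K_{23}$ and $\delta K_{13}$ retain the $Y_{\xi\xi}$ term, exactly as stated.

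The genuine obstacle is purely algebraic bookkeeping, not conceptual. The hard part will be carrying out the simplification of $Q$---confirming the exact numerator $e^{8\xi}-36e^{6\xi}+54e^{4\xi}-36e^{2\xi}+1$---and of the bulky coefficients appearing in $\delta R_{1212}$ and its analogues, where one must verify the precise cancellations (the vanishing of the $Y_\xi$ contribution and the collapse to the stated potentials) without arithmetic error. The evenness check on $Q$, and the fact that the leading coefficients of $Y_{\xi\xi}$ and $Y_{xx}$ are forced to agree by the choice of $\xi$, provide convenient partial verifications along the way.
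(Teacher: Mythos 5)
Your proposal is correct and follows essentially the same route as the paper: the paper states this proposition as an unproved direct computation immediately after introducing the Liouville transformation (\ref{eqn:xir})--(\ref{eqn:YV}), and your plan—reduce $E(V)=0$ to normal form, check that the amplitude in (\ref{eqn:YV}) is exactly the Liouville integrating factor $\exp\left(\half\int\tilde P\,d\xi\right)$ (it is, up to the constant factor $2$), compute $-Q=C/s^2-\frac14\tilde P^2-\half\tilde P_\xi$, and then substitute $V=Y/\phi$, $\pd_r=s\,\pd_\xi$ into Proposition \ref{prop:deltaKijV}—is precisely that computation, carried out with correct formulas. The dictionary identities and the structural checks (evenness of $Q$, absence of $Y_{\xi\xi}$ in $\delta K_{12}$) are all accurate, so the only remaining work is the algebra you already identify.
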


Next, we consider the function 
\begin{equation}\label{eqn:W}
W(\xi,x) = - Y_\xi -\frac{e^{4\xi}-6 e^{2\xi} +1}{2(e^{4\xi}-1)} Y.
\end{equation}
In terms of the variables $r$ and $V$, it is given by
\begin{equation}\label{eqn:WVr}
W = \frac{\sqrt{r}(4-r^2)^{\frac 3 2}}{2(4+r^2)}\left( 2r V(r,x) -(4-r^2) V_r\right).
\end{equation}

\begin{proposition}\label{prop:Wequation}
$W(\xi, x)$ is non-negative and satisfies the following equation, 
\begin{equation}\label{eqn:PDEW}
\left\{
\begin{array}{rcl}
W_{\xi \xi} + W_{xx} -P(\xi) W & = & 0 \\
W(0, x) & = & 0,
\end{array}
\right.
\end{equation}
where the function $P$ is given by
\begin{equation}\label{eqn:P}
P(\xi) = \frac{e^{4\xi} + 10e^{2\xi} + 1}{4(e^{2\xi}-1)^2}.
\end{equation}
Moreover, $W(\xi, x)$ is monotone increasing in $\xi$ and satisfies the inequality
\begin{equation}\label{eqn:IneqnW}
W_{\xi} - \frac{e^{2\xi}+1}{2\left(e^{2\xi}-1\right)} W \geq 0.
\end{equation} 
\end{proposition}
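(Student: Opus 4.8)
The plan is to establish the four assertions in turn, using as the organizing principle the identity
\[
\delta K_{12}=\frac{2\sqrt{2}\,e^{\frac{3}{2}\xi}}{(e^{2\xi}-1)^{\frac{3}{2}}}\,W
\]
from Proposition \ref{prop:YQPDE}: since the prefactor is strictly positive for $\xi>0$, the sign of $W$ is exactly the sign of the first variation $\delta K_{12}$ of the sectional curvature of the $(\pd_r,\pd_x)$-plane. On the 3D cigar $N^2\times\Real$ this plane mixes the flat $\Real$-factor with the $N^2$-factor, so $K_{12}\equiv 0$ there, and likewise the $(\pd_x,\pd_\theta)$-plane gives $K_{23}\equiv 0$. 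On the other hand each $g(t)$ is a complete three-dimensional gradient steady soliton, hence has non-negative sectional curvature by the structure theorem recalled in Section 2 (and \cite{BLChen}). Thus $K_{12}(t)\ge 0=K_{12}(0)$ and $K_{23}(t)\ge 0=K_{23}(0)$ pointwise for $t\in[0,\eps)$, and differentiating at $t=0$ from the right yields the two geometric inputs $\delta K_{12}\ge 0$ and $\delta K_{23}\ge 0$. The first gives $W\ge 0$ for $\xi>0$, and the boundary vanishing below extends this to all of $\overline{\Sigma^2}$.

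For the boundary condition I would read $W$ off formula (\ref{eqn:WVr}): the factor $\sqrt{r}$ in front forces $W\to 0$ as $r\to 0^+$ (equivalently $\xi\to 0^+$), since $V$ is smooth with $V_r(0,x)=0$; hence $W(0,x)=0$. For the equation itself I would exhibit the factorization underlying the definition (\ref{eqn:W}). Writing $a(\xi)=\frac{e^{4\xi}-6e^{2\xi}+1}{2(e^{4\xi}-1)}$ and $\mathcal{A}=\pd_\xi+a$, one checks the two algebraic identities $Q=a^2-a'$ and $P=a^2+a'$ by a short computation in the variable $u=e^{2\xi}$. These say precisely that $\mathcal{A}^{*}\mathcal{A}=-\pd_\xi^2+Q$ and $\mathcal{A}\mathcal{A}^{*}=-\pd_\xi^2+P$, where $\mathcal{A}^{*}=-\pd_\xi+a$ is the formal adjoint. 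Since $W=-\mathcal{A}Y$ and $\mathcal{A}$ commutes with $\pd_{xx}$, the equation $(\mathcal{A}^{*}\mathcal{A}-\pd_{xx})Y=0$ of Proposition \ref{prop:YQPDE} transforms into $(\mathcal{A}\mathcal{A}^{*}-\pd_{xx})W=-\mathcal{A}(\mathcal{A}^{*}\mathcal{A}-\pd_{xx})Y=0$, that is, $W_{\xi\xi}+W_{xx}-P(\xi)W=0$.

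It remains to prove the inequality (\ref{eqn:IneqnW}), which will also give monotonicity. Setting $b(\xi)=\frac{e^{2\xi}+1}{2(e^{2\xi}-1)}=\frac12\coth\xi$ and differentiating $W=-Y_\xi-aY$, I obtain $W_\xi-bW=-Y_{\xi\xi}+(b-a)Y_\xi+(ab-a')Y$. A direct computation gives $b-a=\frac{4e^{2\xi}}{e^{4\xi}-1}$ and $ab-a'=\frac{e^{8\xi}-28e^{6\xi}+6e^{4\xi}-28e^{2\xi}+1}{4(e^{4\xi}-1)^2}$, and comparison with the formula for $\delta K_{23}$ in Proposition \ref{prop:YQPDE} shows
\[
W_\xi-b\,W=\frac{(e^{2\xi}-1)^{\frac12}}{\sqrt{2}\,e^{\frac12\xi}}\,\delta K_{23}.
\]
Because this prefactor is positive for $\xi>0$ and $\delta K_{23}\ge 0$ by the geometric input above, inequality (\ref{eqn:IneqnW}) follows; and since $b\ge 0$ and $W\ge 0$, this forces $W_\xi\ge bW\ge 0$, so $W$ is monotone increasing in $\xi$.

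The computations are routine; the one genuinely essential step is recognizing that $W$ and the combination $W_\xi-bW$ are positive multiples of the curvature variations $\delta K_{12}$ and $\delta K_{23}$, and that both of these are non-negative because $K_{12}$ and $K_{23}$ vanish identically on the product 3D cigar while the sectional curvature stays non-negative along the deformation. I expect the main bookkeeping obstacle to be carrying out the $\delta K_{23}$ match cleanly, since it requires the full second-order expression together with the identity for $ab-a'$.
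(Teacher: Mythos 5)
Your proof is correct and follows essentially the same route as the paper's: $W\ge 0$ and the inequality (\ref{eqn:IneqnW}) come from recognizing $W$ and $W_\xi-\half\coth(\xi)\,W$ as positive multiples of $\delta K_{12}$ and $\delta K_{23}$, both non-negative because these sectional curvatures vanish on $N^2\times\Real$ and stay non-negative along the deformation (your appeal to non-negative rather than positive curvature of $g(t)$ is in fact the more careful formulation), the boundary condition comes from the $\sqrt r$ factor in (\ref{eqn:WVr}), and monotonicity follows from the inequality together with $W\geq 0$. Your factorization identities $Q=a^2-a'$ and $P=a^2+a'$, which make $W=-\mathcal{A}Y$ intertwine the two Schr\"odinger operators, are a clean way of carrying out the derivation of (\ref{eqn:PDEW}) that the paper only asserts ``follows from (\ref{eqn:PDEYQ})''; note also that your prefactor $\tfrac{1}{\sqrt 2}e^{-\half\xi}(e^{2\xi}-1)^{\half}$ in the $\delta K_{23}$ identity differs from the paper's stated $2\sqrt 2\,e^{-\half\xi}(e^{2\xi}-1)^{\half}$ by a constant factor, but both are positive, which is all the argument needs.
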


\begin{remark}
On the surface $\Sigma^2 = \Real \times (0, \infty)$ with the complete metric $ds^2$ given by (\ref{eqn:dssurface}), the Laplace operator has the form
\[
\Delta_{\Sigma} w=\frac{1}{P(\xi)}\left(w_{xx}+ w_{\xi \xi}\right).
\]
Note that here we use $\xi$ instead of $y$ for the second coordinate. Proposition \ref{prop:Wequation} shows that $\Delta_{\Sigma} W = W$, i.e., $W$ is a non-negative eigenfunction with eigenvalue one. 
\end{remark}

\begin{proof}
Since $r=0$ when $\xi = 0$, $W(0,x) = 0$ follows from the defining equation (\ref{eqn:WVr}) of $W$ in terms of $V$ and $r$. The differential equation of $W(\xi,x)$ follows from the equation (\ref{eqn:PDEYQ}). On the 3D cigar soliton we have $K_{12} = K_{23} = 0$ and the deformed metric $g(t)$ with $t>0$ has positive sectional curvatures, it follows that 
\[
\delta K_{12} \geq 0 \quad \text{and} \quad \delta K_{23} \geq 0.
\]
Hence $W$ is a non-negative function. Note that from Proposition \ref{prop:YQPDE} we have
\[
W_\xi - \frac{e^{2\xi}+1}{2(e^{2\xi}-1)} W = 2\sqrt{2}e^{-\half \xi} (e^{2\xi}-1)^\half \delta K_{23} \geq 0.
\]
It gives us the inequality (\ref{eqn:IneqnW}), and it also follows that $W(\xi, x)$ is monotone increasing in $\xi$. 
\end{proof}

\begin{remark}
Let $L(W) = W_{\xi\xi}+ W_{xx} - P(\xi)W$. In terms of variables $r$ and $V$, $L(W)$ contains the 3rd order partial derivatives $V_{rrr}$ and $V_{rxx}$. A direct computation shows that 
\[
-\frac{32(4+r^2)^2}{\sqrt{r}(4-r^2)^{\frac 5 2}} L(W) = \pd_r E(V) - \frac{16 r}{16-r^4}E(V)
\] 
where $E(V)$ is given in equation (\ref{eqn:EV}). However, vanishing of $L(W)$ does not necessarily implies $E(V) = 0$.
\end{remark}

We now show that Theorem \ref{thm:poseigenfunction} implies Theorem \ref{thm:main}.

\begin{proof}[Proof of Theorem \ref{thm:main}]
If $W=0$, then from equation (\ref{eqn:WVr}) we have
\[
2r V- (4-r^2) V_r = 0. 
\]
It follows that $\pd_r \left((4-r^2) V\right) = 0$, i.e., $(4-r^2) V = A(x)$, a function in $x$. From the first variation $h_{ij} = \delta g_{ij}$ in Proposition \ref{prop:1stvariation}, we have
\[
h_{11} = 0
\] 
and
\[
h_{22} = -2 (4-r^2)V = -2 A(x).
\]
It follows that $h_{ij} \in \mathrm{Im} \dive^*$ which generates diffeomorphisms of the 3D cigar soliton.  

If $W\neq 0$, we may assume that $W = W_0$ as given in Theorem \ref{thm:poseigenfunction}. In terms of coordinates $\set{r,x}$, we have
\[
W(r,x) = \frac{\sqrt{2}r^{\frac 3 2}}{\sqrt{4-r^2}}.
\]
Solving equation (\ref{eqn:WVr}) for $V$ yields
\begin{equation}\label{eqn:solutionVlog}
V(r,x) = -\frac{8\sqrt{2}}{(4-r^2)^2} - \frac{\sqrt{2}\log(4-r^2)}{4-r^2} + \frac{A(x)}{4-r^2}
\end{equation}
for some function $A(x)$. However, the solution above does not solve equation $E(V) = 0$ in Proposition \ref{prop:hVEV} as 
\[
E\left(V(r,x)\right) = -\frac{16\sqrt{2}(4+r^2)}{4-r^2}.
\]
This shows that the second case cannot occur and thus we finish the proof. 
\end{proof}

\medskip{}
% -------------------------------------------------------------------------------------------------
\appendix

\section{Three dimensional gradient steady Ricci soliton with two Killing vector fields}

Suppose that $(M^3,g)$ is a gradient steady Ricci soliton with two non-trivial Killing vector fields $X_1$ and $X_2$. If $X_1$ does not commute with $X_2$, then the Lie algebra of the isometric group contains $\mathfrak{so}(3)$, i.e., the orthogonal group $\mathsf{SO}(3)$ acts on $M^3$ isometrically. It follows that $M^3$ is rotationally symmetric and hence isometric to the Bryant soliton.

\begin{thm}\label{thm:rigidity2Killing}
Suppose that $(M^3, g)$ is a complete nonflat gradient steady Ricci soliton. If $M$ admits two non-trivial commuting Killing vector fields, then its universal cover is isometric to the 3D cigar soliton  $N^2\times \Real$. 
\end{thm}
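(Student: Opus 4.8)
The plan is to reduce to the structure dichotomy recalled at the start of Section~2 and then to rule out the positively curved alternative by an isotropy argument at a fixed point. By that theorem the universal cover $(\tilde M,\tilde g)$ either already splits as $N^2\times\Real$, in which case there is nothing to prove, or $(M^3,g)$ carries a metric of strictly positive sectional curvature. So the entire content is to show that a positively curved complete gradient steady soliton cannot admit two linearly independent commuting Killing fields $X_1,X_2$.

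Assume $\sec>0$, so that $\Ric>0$ and $f$ is strictly convex, and $M$ is diffeomorphic to $\Real^3$. As in the proof of Proposition~\ref{prop:lengthelem} (compare \cite{CaoChenLCF} and \cite{BishopONeill}), the sublevel sets $\set{f\le c}$ are compact and strictly convex, so $f$ attains its minimum at a unique point $O$, which is the unique critical point of $f$; since $\nabla R=-2\Ric(\nabla f)$ and $\Ric$ is invertible, $O$ is also the unique critical point of $R$. Because each $X_i$ generates isometries, its flow preserves $R$ and therefore permutes the critical points of $R$; as $O$ is the only one, the flow fixes $O$, whence $X_1(O)=X_2(O)=0$. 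The same argument shows that every Killing field of $g$ vanishes at $O$.

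I would then linearize at $O$. Let $\mathfrak{g}$ denote the Lie algebra of Killing fields and consider the isotropy representation $\rho\colon\mathfrak{g}\to\mathfrak{so}(T_OM)\cong\mathfrak{so}(3)$, $X\mapsto(\nabla X)|_O$. Since a Killing field is determined by its $1$-jet at a point, $\rho$ is injective, and being the differential of the linear isotropy action of the isometry group it respects Lie brackets (up to sign). Thus $[X_1,X_2]=0$ forces $[\rho(X_1),\rho(X_2)]=0$ with $\rho(X_1),\rho(X_2)$ linearly independent, producing a two-dimensional abelian subalgebra of $\mathfrak{so}(3)$. This is impossible, because $\mathfrak{so}(3)$ has rank one and any two of its commuting elements are proportional. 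Hence the positively curved case does not occur, and $(\tilde M,\tilde g)$ is isometric to the 3D cigar soliton $N^2\times\Real$.

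The step I expect to be the main obstacle is securing the common fixed point $O$: namely, proving that on a positively curved steady soliton $f$ actually attains its minimum (equivalently that $R$ attains its maximum), which is where properness of $f$ — the compactness of the sublevel sets — must be invoked rather than just strict convexity. Once a common zero of $X_1$ and $X_2$ is in hand, the remaining Lie-theoretic comparison inside $\mathfrak{so}(3)$ is routine.
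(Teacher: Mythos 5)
Your final Lie--theoretic step is fine as far as it goes: if both Killing fields vanished at a common point $O$, then the isotropy map $X \mapsto (\nabla X)|_O$ is injective on Killing fields vanishing at $O$, it carries commuting fields to commuting elements of $\mathfrak{so}(3)$, and $\mathfrak{so}(3)$ has no two-dimensional abelian subalgebra; this is the same rank-one fact the paper uses to dispose of the non-commuting case. The genuine gap is exactly the step you flag at the end but never close: producing the point $O$. You need the strictly convex potential $f$ to attain its minimum (equivalently, $R$ to attain its maximum), and this is \emph{not} a consequence of strict convexity, nor is it a known theorem for complete positively curved steady solitons. Strict convexity alone does not give properness: on Euclidean $\Real^2$ the function $f(x,y)=x^2+e^y$ is smooth, strictly convex, has no critical point, and has noncompact sublevel sets. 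The compactness of $\set{f\le c}$ cited from Bishop--O'Neill and Cao--Chen is derived only \emph{after} a critical point is known to exist; that is why in Proposition \ref{prop:lengthelem} the attainment of $\max R$ appears as a hypothesis, and why it is included among the circle symmetry conditions as the ``technical condition'' the authors say they would like to remove. Your proposal is therefore circular at its crucial point: you want properness of $f$ to get the minimum, but the minimum is what gives properness.

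The paper's own proof of Theorem \ref{thm:rigidity2Killing} is designed precisely to avoid any appeal to a critical point of $f$ or a maximum of $R$. Instead of a fixed-point/isotropy argument, it uses the two commuting Killing fields to build adapted coordinates (Lemma A.2) in which $ds^2 = y^2\left[dt+(q(y)-\xi(x))dx\right]^2 + p^2(y)\,dx^2 + \Omega^2(y)\,dy^2$ and $f=f(y)$, and then computes that $f_{11}f_{22}-f_{12}^2$ and the sectional curvature $K_{12}$ are \emph{opposite-sign} multiples of the same quantity $y^4 q'(y)^2 - 4ypp'(y)$; positive definiteness of $\nabla^2 f=\Ric$ forces this quantity to be negative while $K_{12}>0$ forces it to be positive, a contradiction valid everywhere on the open dense set, with no global properness input. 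Unless you can independently prove that $R$ attains its maximum on a complete positively curved steady soliton (a statement of independent interest that the paper deliberately does not rely on here), your approach does not go through, and the coordinate computation is the way to close the argument.
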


We first show that there is an adapted coordinate system from the commuting Killing vector fields such that the metric has a simple form.

\begin{lemma}
If in addition $(M^3,g)$ has positive sectional curvature, then on an open dense subset $M_0\subset M$ the metric has the following length element
\begin{equation}\label{eqn:ds2Killing}
ds^2 = y^2\left[dt +(q(y) -\xi(x))dx\right]^2 + p^2(y) dx^2 + \Omega^2(y) dy^2,
\end{equation}
where $p(y)\geq 0, \Omega(y) \geq 0, q(y)$ are function in $y$ and $\xi(x)$ is a function in $x$.  Moreover the Killing vector fields are given by
\[
X_1 = \pd_t \quad \text{and} \quad X_2 = \xi(x) \pd_t + \pd_x. 
\]
\end{lemma}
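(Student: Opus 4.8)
The plan is to construct the coordinates directly out of the two commuting Killing fields, using the norm of $X_1$ as the transverse coordinate, and then to read off the normal form (\ref{eqn:ds2Killing}) from the Killing equations $\mathscr{L}_{X_2} g = 0$. The positive curvature hypothesis should enter only to guarantee that the construction is valid on an open dense set. I would first set up the background exactly as in the proof of Proposition \ref{prop:lengthelem}: positive sectional curvature gives bounded curvature, a unique maximum point $O$ of $R$, a strictly convex $f$ with $O$ its only critical point, and compact sublevel sets $M^c$. Since Killing fields preserve all curvature invariants, the flows of $X_1$ and $X_2$ fix $O$; together with $\nabla^2(X_i f) = \mathscr{L}_{X_i}\Ric = 0$ and the absence of a parallel vector field on a positively curved manifold, this yields $X_1 f = X_2 f = 0$. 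I would then let $M_0$ be the open set on which $X_1 \ne 0$, the fields $X_1, X_2$ are linearly independent, and $dy \ne 0$ (with $y$ defined below), and argue that its complement is a proper closed set, so that $M_0$ is dense, paralleling the treatment of the singular set $S \cup Z$ in Proposition \ref{prop:lengthelem}.

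Next, on $M_0$ I would introduce the coordinate $t$ with $X_1 = \pd_t$, so that all metric coefficients become independent of $t$, and set $y = \sqrt{g(X_1, X_1)}$. Then $X_1 y = 0$ trivially, while
\[
X_2\, g(X_1, X_1) = (\mathscr{L}_{X_2} g)(X_1, X_1) + 2 g([X_2, X_1], X_1) = 0
\]
because $X_2$ is Killing and $[X_1, X_2] = 0$; hence $X_2 y = 0$ and $y$ is invariant under the local $\mathbb{R}^2$-action. Using $y$ as the third coordinate, reducing the metric along the $X_1$-orbits, choosing the remaining base coordinate $x$ orthogonal to $y$, and gauging away the $dy$-component of the connection one-form by a shift $t \mapsto t - \psi(x,y)$, I would bring $g$ to the form $y^2(dt + q\, dx)^2 + p^2 dx^2 + \Omega^2 dy^2$ with $q, p, \Omega$ a priori functions of $(x,y)$.

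Finally, since $X_2 y = 0$ and $[X_1, X_2] = 0$, the field must read $X_2 = a(x,y)\pd_t + b(x,y)\pd_x$, and reparametrizing $x$ normalizes $b \equiv 1$. Imposing $\mathscr{L}_{X_2} g = 0$ component by component then closes the argument: the $(t,y)$-component forces $a_y = 0$, so $a = \xi(x)$; the $(t,x)$-component gives $q_x = -\xi'(x)$, hence $q = q(y) - \xi(x)$; and the $(y,y)$- and $(x,x)$-components give $\Omega = \Omega(y)$ and $p = p(y)$. This is precisely (\ref{eqn:ds2Killing}) together with $X_1 = \pd_t$ and $X_2 = \xi(x)\pd_t + \pd_x$. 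I expect the Killing-equation computation to be routine; the genuinely delicate step is the genericity bookkeeping of the first paragraph, namely showing that $M_0$ is open and dense and that $y$ is a legitimate coordinate there, which must be handled with the same care as the set $S \cup Z$ in Proposition \ref{prop:lengthelem}.
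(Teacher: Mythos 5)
Your computational core---adapted coordinates $(t,x,y)$ with $X_1=\pd_t$ and $y^2=g(X_1,X_1)$, the identity $X_2\,g(X_1,X_1)=(\mathscr{L}_{X_2}g)(X_1,X_1)+2g([X_2,X_1],X_1)=0$, and then reading the normal form off the components of $\mathscr{L}_{X_2}g=0$---is essentially the paper's argument. But there is a genuine gap at exactly the point you flag as ``delicate'' and then defer: you never show that the locus $\set{dy\neq 0}$ is non-empty, let alone dense. This is precisely where the paper makes essential use of the soliton equation together with positive curvature, via a preliminary claim: \emph{a non-trivial Killing field $X$ on a positively curved gradient steady soliton cannot have constant length}. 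Indeed, if $\abs{X}^2$ were constant, then $\nabla_XX=-\frac12\nabla\abs{X}^2=0$ and, by irreducibility, $g(\nabla f,X)=0$, so
\[
\Ric(X,X)=\nabla^2 f(X,X)=D_X\,g(\nabla f,X)-g(\nabla f,\nabla_XX)=0,
\]
contradicting positive curvature. Without this claim your construction may never start: if $\abs{X_1}$ were constant then $dy\equiv 0$ and your $M_0$ is empty. Moreover, your stated justification for density---``the complement is a proper closed set, so $M_0$ is dense''---is not valid reasoning: a proper closed subset of $M$ can have non-empty interior. What is needed is that each piece of the complement is nowhere dense, and for $\set{dy=0}$ this rests exactly on the non-constancy claim above. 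Your intuition that positive curvature ``should enter only to guarantee that the construction is valid on an open dense set'' is correct, but that entrance \emph{is} the key step of the proof, and it is missing from your argument.

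A second, fixable, defect is one of ordering: you normalize $X_2=a\,\pd_t+b\,\pd_x$ to $b\equiv1$ by reparametrizing $x$ \emph{before} imposing the Killing equations. The only reparametrizations preserving your block-diagonal form $y^2(dt+q\,dx)^2+p^2dx^2+\Omega^2dy^2$ are of the type $\tilde x=\tilde x(x)$, and these can absorb $b$ only if $b$ is independent of $y$. But $\pd_y b=0$ is itself one of the Killing equations---in the paper it is the vanishing of $\mathscr{L}_{X_2}g_{23}$, simplified using the $g_{13}$-component---so you must first impose the $(t,y)$- and $(x,y)$-components of $\mathscr{L}_{X_2}g=0$ to conclude $a=a(x)$ and $b=b(x)$, and only then rescale $x$ to make $b\equiv1$; the paper normalizes its $\eta$ to $1$ at the very end for exactly this reason. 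With that reordering, the remaining components give $q_x=-\xi'(x)$, $p=p(y)$, $\Omega=\Omega(y)$ as you say, and the lemma follows.
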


\begin{proof}
First we claim that any non-trivial Killing vector field cannot have constant length. Suppose not and $y^2 = \abs{X}^2$ is a positive constant on $M$ and $X$ is a Killing vector field. From the gradient steady Ricci soliton we have
\begin{eqnarray*}
\Ric(X, X) & = & \nabla^2 f(X, X) \\
& = & g\left(\nabla_{X}\nabla f, X\right) \\
& = & D_{X} g(\nabla f, X) - g(\nabla f, \nabla_{X}X).
\end{eqnarray*}
Since $M^3$ is irreducible, we have $g(\nabla f, X)=0$. On the other hand we have $\nabla_{X} X = -\frac 1 2 \nabla \left(y^2\right) = 0$. It follows that $\Ric(X, X) = 0$ that contradicts the assumption that $M$ has positive sectional curvature. 

Denote $\mathfrak D$ the integrable distribution spanned by the commuting Killing vector fields $X_1$ and $X_2$. Let $\set{t,x}$ be the local coordinates of  the integral submanifold of $\mathfrak D$ with $X_1=\pd_t$. Denote $y^2= g(X_1,X_1)$. Then we have $D_{X_1} y =0$ and 
\begin{eqnarray*}
2y D_{X_2} y  & = & D_{X_2} g(X_1, X_1) = 2 g\left(\nabla_{X_2} X_1, X_1\right) = 2g \left(\nabla_{X_1} X_2, X_1\right) \\
& = & 0.
\end{eqnarray*}
Here we used $\nabla_{X_1}X_2 = \nabla_{X_2}X_1$ as they commute. Since $y$ is not a constant function, the length element of the metric $g$ has the following form
\[
ds^2= y^2\left(dt + A(x,y) dx\right)^2 + \frac{B^2(x,y)}{y^2}dx^2+ \Omega^2(x,y)dy^2.
\]
Note that $B(x,y)dt dx$ is the area element of the surface with $\set{t,x}$-coordinates. Let
\[
X_2 =\xi (t,x,y)\pd_t + \eta(t,x,y)\pd_x
\]
be the second Killing vector field. If $\eta$ is the trivial function, then equation $\mathscr L_{X_2} g = 0$ implies that $\xi$ is a constant. So we assume that $\eta(t,x,y) \ne 0$. Since the Lie bracket is given by
\[
[X_1, X_2] = \pd_t \xi \pd_t + \pd_t \eta \pd_x =0
\]
we have $\xi = \xi(x,y)$ and $\eta = \eta(x,y)$. The Lie derivative of the metric tensor has the following component
\[
\mathscr L_{X_2} g_{33} = 2\eta \Omega \pd_x \Omega
\]
that yields $\Omega = \Omega(y)$. We have $\pd_y \xi = - A(x,y)\pd_y \eta$ from 
\[
\mathscr L_{X_2} g_{13} = y^2\left(\pd_y \xi+ A(x,y)\pd_y \eta \right) = 0.
\]
Then $\mathscr L_{X_2} g_{23}$ can be simplified as
\[
\mathscr L_{X_2} g_{23} = \frac{B^2}{y^2}\pd_y \eta  =0
\]
that yields $\pd_y \eta = 0$ and $\pd_y \xi = 0$, i.e., $\xi = \xi(x)$ and $\eta = \eta(x)$. It follows that the nonzero components are given by
\begin{eqnarray*}
\mathscr L_{X_2} g_{12} = \mathscr L_{X_2} g_{21} & = & y^2 \frac{\pd}{\pd x}\left(\xi(x) + A(x,y)\eta (x)\right) \\
\mathscr L_{X_2} g_{22} - 2A(x,y) \mathscr L_{X_2} g_{12} & = & \frac{2B}{y^2}\frac{\pd}{\pd x}\left(B(x,y) \eta(x)\right).
\end{eqnarray*}
The vanishing of the two terms above implies that there exist functions $p=p(y)$ and $q=q(y)$ such that 
\begin{eqnarray*}
A(x,y) & = & \frac{q(y) - \xi(x)}{\eta(x)} \\
B(x,y) & = & \frac{p(y)}{\eta(x)}.
\end{eqnarray*}
It follows that the length element can be written as
\[
ds^2= y^2 \left(dt + \frac{q(y)-\xi(x)}{\eta(x)}dx\right)^2 + \frac{p^2(y)}{\eta^2(x)}dx^2 + \Omega^2(y) dy^2.
\]
So we can reparametrize the $x$-coordinate such that $\eta = 1$ and then the metric has the length element as in the statement.
\end{proof}

\begin{proof}[Proof of Theorem \ref{thm:rigidity2Killing}]
By Theorem 2.1, it suffices to show that $(M,g)$ cannot have positive sectional curvature. Suppose not, then $M$ has the length element in (\ref{eqn:ds2Killing}) on an open dense subset by Lemma A.2. Since $D_{X_i}f = 0$($i=1,2$) we have the potential function $f = f(y)$. Denote $f_{ij} =\nabla_i\nabla_j f$ the components of hessian of $f$ and then the non-vanishing $f_{ij}$'s are given by
\begin{eqnarray*}
f_{11} & = & \frac{y}{\Omega^2} f'(y)  \\
f_{12} = f_{21} & = & \frac{y f'(y)}{2\Omega^2}\left(2q - 2\xi + y q'(y)\right) \\
f_{22} & = & \frac{f'(y)}{\Omega^2}\left[y \xi^2 -\left(y^2 q'(y) + 2y q\right) \xi + y q^2 + y^2 qq'(y) + p p'(y)\right] \\
f_{33} & = & f''(y) - \frac{f'(y)\Omega'(y)}{\Omega}.
\end{eqnarray*}
It follows that 
\[
f_{11}f_{22}- f_{12}^2 = - \frac{f'(y)^2}{4 \Omega^4}\left(y^4 q'(y)^2 - 4 y p p'(y)\right).
\]
The sectional curvature $K_{12}$ of the plane spanned by $\set{\pd_t, \pd_x}$ is given by
\[
K_{12} =\frac{R_{1212}}{\abs{\pd_t \wedge \pd_x}^2} = \frac{1}{4\Omega^2 \abs{\pd_t \wedge \pd x}^2} \left(y^4 q'(y)^2 - 4 y p p'(y)\right).
\]
Since $\nabla^2 f = \Ric $ and $M$ has positive Ricci curvature, the hessian $\nabla^2 f$ is positive definite and thus $f_{11}f_{22}- f_{12}^2 > 0$. However, this contradicts the assumption that $K_{12}>0$. 
\end{proof}

\medskip{}
% -------------------------------------------------------------------------------------------------

% -------------------------------------------------------------------------------------------------

\end{document}